%
%
%

\documentclass{amsart}
\usepackage[all,ps,cmtip]{xy}
\usepackage{amsmath, amssymb}
\usepackage{amscd}

\newtheorem{theorem}{Theorem}[section]
\newtheorem{lemma}[theorem]{Lemma}

\newtheorem{corollary}[theorem]{Corollary}

\theoremstyle{definition}
\newtheorem{definition}[theorem]{Definition}

\newtheorem{question}[theorem]{Question}

\theoremstyle{remark}
\newtheorem{remark}[theorem]{Remark}

\numberwithin{equation}{section}

\DeclareMathOperator{\rank}{rk}

\DeclareMathOperator{\id}{id}

\DeclareMathOperator{\td}{td}

\DeclareMathOperator{\codim}{codim}

\DeclareMathOperator{\Hom}{Hom}

\DeclareMathOperator{\spec}{Spec}

\DeclareMathOperator{\Coh}{Coh}

\DeclareMathOperator{\D}{D}

\DeclareMathOperator{\K}{K}

\DeclareMathOperator{\Spec}{Spec}

\DeclareMathOperator{\ch}{ch}

\DeclareMathOperator{\chr}{char}


\begin{document}
\title{Bogomolov's inequality for product type varieties in positive characteristic}

\author{Hao Max Sun}

\address{Department of Mathematics, Shanghai Normal University, Shanghai 200234, People's Republic of China}

\email{hsun@shnu.edu.cn, hsunmath@gmail.com}



\subjclass[2000]{Primary 14J60: Secondary 14G17, 14F05}

\date{May 16, 2019}

\keywords{Bogomolov's inequality, semistable sheaf, Hilbert
stability, Bridgeland stability condition, positive characteristic}

\begin{abstract}
We prove Bogomolov's inequality for semistable sheaves on product
type varieties in arbitrary characteristic. This gives the first
examples of varieties of general type in positive characteristic on
which Bogomolov's inequality holds for semistable sheaves of any
rank. The key ingredient in the proof is a high rank generalization
of the slope inequality established by Xiao and Cornalba-Harris.
This Bogomolov's inequality is applied to study the positivity of
linear systems and semistable sheaves and construct Bridgeland
stability conditions on product type surfaces in positive
characteristic. We also give some new counterexamples to Bogomolov's
inequality and pose some open questions.
\end{abstract}

\maketitle

\setcounter{tocdepth}{1}
\tableofcontents

\section{Introduction}
Throughout this paper, we fix an algebraically closed field $k$ of
arbitrary characteristic. Let $X$ be a smooth projective variety
defined over $k$ with $\dim X\geq2$, and let $H$ be an ample divisor
on $X$. The famous Bogomolov's inequality says that if $\chr(k)=0$,
then
$$\Delta(E)H^{\dim X-2}=(\ch^2_1(E)-2\ch_0(E)\ch_2(E))H^{\dim X-2}\geq0,$$ for any
$\mu_H$-semistable sheaf $E$ on $X$. It was proved by Bogomolov
\cite{Bog} when $\dim X=2$, and it can be easily generalized to
higher dimensional case by the Mehta-Ramanathan restriction theorem.

In the case of $\chr(k)>0$, Langer \cite{Langer1} proved that the
same inequality holds for strongly $\mu_H$-semistable sheaves. Mehta
and Ramanathan \cite{MR} showed that if $X$ satisfies
$\mu_H^+(\Omega_X^1)\leq0$, then all $\mu_H$-semistable sheaves on
$X$ are strongly $\mu_H$-semistable. Thus Bogomolov's inequality
holds on such an $X$. One notices that the Kodaira dimension of this
$X$ is non-positive.

In general it is well known that Bogomolov's inequality fails for
semistable sheaves in positive characteristic. And it is only known
to be held for some special varieties. For example, Shepherd-Barron
\cite{SB} proved that Bogomolov's inequality holds for rank two
semistable sheaves on surfaces which are neither quasi-elliptic with
$\kappa(X)=1$ nor of general type. This result was generalized by
Langer \cite[Theorem 7.1]{Langer3} to the higher-rank case. And
Langer \cite{Langer2} also showed this inequality holds for any
semistable sheaf $E$ with $\rank E\leq\chr(k)$ on a variety which
can be lifted to the ring of Witt vectors of length 2. In this
paper, we prove that Bogomolov's inequality holds for semistable
sheaves of arbitrary rank on product type varieties in any
characteristic. It gives the first examples of varieties of general
type in positive characteristic on which Bogomolov's inequality
holds for semistable sheaves of any rank.

\begin{definition}
Let $X$ be a smooth projective variety defined over $k$.
\begin{enumerate}
\item We say that $X$ is a product type variety if there exist smooth
projective curves $C_1$,$\cdots$, $C_n$ defined over $k$ and a
finite separable surjective morphism $f:C_1\times\cdots\times
C_n\rightarrow X$.

\item A divisor $H$ on the product type variety $X$ is called a product type ample divisor if
$f^*H$ can be written as $f^*H=p_1^*A_1+\cdots+p_n^*A_n$ for some
ample divisors $A_i$ on $C_i$, where $p_i:C_1\times\cdots\times
C_n\rightarrow C_{i}$ is the projection for $i=1,\cdots,n$.
\end{enumerate}
\end{definition}
A simple example of product type varieties is the symmetric product
of a curve. The varieties isogenous to a product of curves
introduced by Catanese \cite{Cat} are other important examples. See
also \cite{BP, FGP} for a huge number of interesting examples called
product-quotient varieties. The product-quotient varieties are our
product type varieties if they are smooth. Our main results are the
following two theorems.
\begin{theorem}\label{main}
Let $X$ be a product type variety of dimension $n$ and $H$ a product
type ample divisor on $X$. Then for any $\mu_{H}$-semistable sheaf
$E$ on $X$, we have
$$H^{n-2}\Delta(E)\geq0.$$
\end{theorem}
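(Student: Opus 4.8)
The plan is to transfer the problem to an honest product of curves and then to prove the inequality there by induction on $n$, the inductive step being a high rank version of the slope inequality for the projection onto one curve factor.

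\textbf{Step 1: reduction to a product of curves.} Let $Y=C_1\times\cdots\times C_n$ and let $f\colon Y\to X$ be the given finite separable surjective morphism, so that $L:=f^*H=\sum_i p_i^*A_i$. A finite surjective morphism of smooth varieties is flat, hence $f^*E$ is torsion free, and for any sheaf $V$ on $X$ one has $\mu_L(f^*V)=\deg(f)\,\mu_H(V)$ because $c_1(f^*V)\cdot L^{n-1}=\deg(f)\,c_1(V)\cdot H^{n-1}$. I would first check that $f^*E$ is $\mu_L$-semistable: passing to a Galois closure $g\colon Z\to Y$ with $h=f\circ g\colon Z\to X$ Galois of group $G$, if $h^*E$ were unstable its maximal destabilizing subsheaf would be unique, hence $G$-invariant, hence descend to a subsheaf of $E$ of slope $>\mu_H(E)$, contradicting semistability; so $h^*E=g^*(f^*E)$ is semistable, and since $g$ is finite this forces $f^*E$ to be semistable. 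As $\ch(f^*E)=f^*\ch(E)$ we get $\Delta(f^*E)=f^*\Delta(E)$ and $L^{n-2}\Delta(f^*E)=\deg(f)\cdot H^{n-2}\Delta(E)$. It therefore suffices to prove $L^{n-2}\Delta(F)\ge 0$ for every $\mu_L$-semistable sheaf $F$ on $Y=\prod C_i$ with $L=\sum_i p_i^*A_i$.

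\textbf{Step 2: induction and the relative filtration.} I would induct on $n$. Fix $\pi=p_1\colon Y\to B:=C_1$ with fibre $Z=\prod_{i\ge2}C_i$ of dimension $n-1$ and relative polarization $L_Z=\sum_{i\ge2}p_i^*A_i$, and note $L=p_1^*A_1+L_Z$ with $(p_1^*A_1)^2=0$. The obstruction to a naive restriction argument is that $F|_Z$ need not be $L_Z$-semistable; I would absorb this into the relative Harder--Narasimhan filtration $0=F_0\subset\cdots\subset F_\ell=F$ of $F$ along $\pi$, whose graded pieces are by construction $L_Z$-semistable on the general fibre. The induction hypothesis, applied on the $(n-1)$-dimensional fibre $Z$, then gives $L_Z^{\,n-3}\Delta\ge0$ for each such piece, while the global $\mu_L$-semistability of $F$ bounds the $\mu_L$-slopes of the $F_i$ from above by $\mu_L(F)$. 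These are exactly the two inputs that the slope inequality will combine.

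\textbf{Step 3: the high rank slope inequality.} The inductive step is a high rank Cornalba--Harris inequality for the fibration $\pi\colon Y\to B$ with fibre $Z$: for a sheaf whose relative Harder--Narasimhan pieces are fibrewise semistable, the ``Hilbert line bundle'' built from the pushforwards $\pi_*(F\otimes(\text{relative polarization})^m)$ has nonnegative degree, and expanding that degree by Grothendieck--Riemann--Roch rewrites it through the relative Chern numbers of $F$, that is, through $L^{n-2}\Delta(F)$ together with the fibre discriminants $L_Z^{\,n-3}\Delta$ of the graded pieces. The nonnegativity of the Hilbert line bundle comes from the fact that a slope-semistable sheaf on the fibre has a GIT-semistable Hilbert point; summing over the filtration by Xiao's telescoping argument, and feeding in the fibre discriminant bounds from the induction hypothesis and the slope bounds from global semistability, yields $L^{n-2}\Delta(F)\ge0$. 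For $n=2$ the fibre is a curve, the fibre discriminants vanish, and the statement is precisely the surface slope inequality of Xiao and Cornalba--Harris.

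\textbf{Main obstacle.} The only genuinely characteristic-sensitive ingredient is the GIT input of Step 3: one must know that slope-semistability of a sheaf on a curve (or, in the higher-dimensional induction, on the fibre $Z$) implies GIT-semistability of its Hilbert or Chow point in \emph{every} characteristic, and then carry this through the relative determinant construction so that the resulting inequality is uniform in $\chr(k)$. Equally delicate is the bookkeeping of the relative Harder--Narasimhan filtration, since the summed slope inequality must stay strong enough to force nonnegativity of $\Delta$; it is here that the product hypothesis, forcing both base and fibre polarizations to be pulled back from the curve factors, keeps the intersection estimates clean and the induction self-contained.
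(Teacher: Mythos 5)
Your Step 1 is exactly the paper's reduction (semistability is preserved under pullback by the finite separable $f$, and $L^{n-2}\Delta(f^*E)=\deg(f)\,H^{n-2}\Delta(E)$), and your base case $n=2$ is the paper's Theorem \ref{Surface}. The genuine gap is in Steps 2--3: your inductive engine needs, for the fibration $\pi\colon Y\to C_1$ with fibre $Z$ of dimension $n-1$, that slope semistability of a sheaf on $Z$ implies GIT semistability of its Hilbert point, in every characteristic. No such equivalence exists once $\dim Z\geq 2$: slope semistability does not even imply Gieseker semistability there, and the Hilbert--Mumford comparisons on higher-dimensional supports are asymptotic and characteristic-zero statements at best. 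The equivalence is a theorem only for bundles on \emph{curves} (Schmitt \cite{Sch}), and the entire purpose of the paper's Sections \ref{S2}--\ref{S3} (the characteristic-free tensor-slope bounds of Theorem \ref{tensor} and the surjectivity of $\wedge^rH^0(E)\to H^0(\wedge^rE)$ in Theorem \ref{exterior}) is to extend precisely that curve-level statement to positive characteristic. Accordingly, the paper never runs the Cornalba--Harris machine over a higher-dimensional fibre: Theorems \ref{CH} and \ref{Slope} and Corollary \ref{relative} all concern fibrations with one-dimensional fibres, concretely $C\times Y\to Y$. Note also that even at $n=2$ what you need is the high-rank relative inequality of Theorem \ref{Slope} comparing $\pi_*c_1^2(E)$ with $c_1(\pi_*E)$, not the classical rank-one slope inequality for $\omega_{X/B}$ of Xiao and Cornalba--Harris; you flag this GIT input as the ``main obstacle'' but supply nothing for it, and for $n\geq3$ it is not an obstacle to be carried through but a missing (likely false) statement.

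Your Step 2 mechanism is also not viable as stated: absorbing the failure of fibrewise semistability into the relative Harder--Narasimhan filtration and ``Xiao's telescoping'' leaves the cross terms in the discriminant uncontrolled, since $\Delta$ is not additive along filtrations and global $\mu_L$-semistability only bounds slopes, not the quadratic terms $\xi^2$. The paper's actual device is different: if $E$ fails to be $\mu_{F_j,H,\dots,H}$-semistable, the change-of-polarization lemma \cite[Lemma 4.C.5]{HL} produces $t\geq0$ and a saturated $E_0\subset E$ with $E$, $E_0$, $E_1=E/E_0$ semistable of equal slope for $H_t=H+tF_j$; then the Hodge index theorem gives $H^{n-2}\xi^2\leq0$ for $\xi=rc_1(E_0)-r_0c_1(E)$, the identity $H^{n-2}\bigl(\Delta(E)-\tfrac{r}{r_0}\Delta(E_0)-\tfrac{r}{r_1}\Delta(E_1)\bigr)=-H^{n-2}\xi^2/(r_0r_1)\geq0$ reduces to lower rank, and a double induction $P(r-1,n),P(r,n-1)\Rightarrow P(r,n)$ closes the argument: when $E$ \emph{is} $\mu_{F_j,H,\dots,H}$-semistable for every $j$, openness of semistability makes $E|_{F_j}$ semistable on the $(n-1)$-dimensional product $F_j$, the dimension induction gives $F_jH^{n-3}\Delta(E)\geq0$, and writing $H=\sum_i p_i^*B_i$ yields $H^{n-2}\Delta(E)=\sum_j b_jF_jH^{n-3}\Delta(E)\geq0$. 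So the higher-dimensional fibre is handled purely by induction and restriction, with GIT entering exactly once, on curve fibres. To repair your proof you would either have to prove the higher-dimensional Hilbert-stability equivalence (open, and stronger than anything known even in characteristic zero) or restructure the induction along these lines.
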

The product type assumption on $H$ can be dropped when $n=2$:

\begin{theorem}\label{Surface}
Let $S$ be a smooth projective surface birational to a product type
surface and $H$ a numerically nontrivial nef divisor on $S$. Then
for any $\mu_{H}$-semistable sheaf $E$ on $S$, we have
$\Delta(E)\geq0$.
\end{theorem}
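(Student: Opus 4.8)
The plan is to exploit the fact that the discriminant $\Delta(E)$ is a single intersection number that does \emph{not} depend on the polarization $H$: only the semistability hypothesis depends on $H$. So the whole content is to show that $\Delta(E)<0$ would force $E$ to be destabilized with respect to \emph{every} numerically nontrivial nef divisor, which we derive from Theorem \ref{main} via Bogomolov instability. First I would dispose of the word ``birational''. Choose a smooth surface $W$ with birational morphisms $\phi\colon W\to S$ and $\psi\colon W\to X$ onto the product type surface $X$; each is a composition of blow-ups. Since $\Delta(\phi^{*}E)=\Delta(E)$ (the Chern character pulls back and intersection numbers are preserved by a birational morphism) and $\phi^{*}E$ is $\mu_{\phi^{*}H}$-semistable with $\phi^{*}H$ still nef and numerically nontrivial, it suffices to treat sheaves on $W$, an iterated blow-up of $X$. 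Here I would invoke the standard blow-down comparison: if $\mathcal F$ is torsion-free on $W$ and $\mathcal F'=(\psi_{*}\mathcal F)^{\vee\vee}$, then $c_{1}(\mathcal F')\cdot\psi^{*}D=c_{1}(\mathcal F)\cdot\psi^{*}D$ for divisor classes $D$ on $X$ and $\Delta(\mathcal F')\le\Delta(\mathcal F)$, so that a violation on $W$ descends to a violation on $X$. This reduces everything to the assertion: for a product type surface $X$ and a numerically nontrivial nef $H$, a $\mu_{H}$-semistable torsion-free sheaf $E$ satisfies $\Delta(E)\ge0$.

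For the heart of the matter I would argue by contradiction, assuming $\Delta(E)<0$, and reconstruct Bogomolov's instability theorem using Theorem \ref{main} as the positive-characteristic input. Fix a product type ample divisor $H_{0}$ on $X$ (take ample $A_{i}$ on $C_{i}$; then $f^{*}H_{0}=p_{1}^{*}A_{1}+p_{2}^{*}A_{2}$ descends to an ample $H_{0}$ since $f$ is finite). Take the Harder--Narasimhan filtration $0=E_{0}\subset E_{1}\subset\cdots\subset E_{l}=E$ with respect to $H_{0}$, with $\mu_{H_{0}}$-semistable graded pieces $\mathrm{gr}_{i}$. By Theorem \ref{main} (case $n=2$) applied to each $\mathrm{gr}_{i}$ we have $\Delta(\mathrm{gr}_{i})\ge0$. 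Substituting into the Bogomolov identity
\[
\frac{\Delta(E)}{\rank E}=\sum_{i}\frac{\Delta(\mathrm{gr}_{i})}{\rank \mathrm{gr}_{i}}-\sum_{i<j}\frac{\rank \mathrm{gr}_{i}\,\rank \mathrm{gr}_{j}}{(\rank E)^{2}}\Big(\tfrac{c_{1}(\mathrm{gr}_{i})}{\rank \mathrm{gr}_{i}}-\tfrac{c_{1}(\mathrm{gr}_{j})}{\rank \mathrm{gr}_{j}}\Big)^{2},
\]
the hypothesis $\Delta(E)<0$ together with $\Delta(\mathrm{gr}_{i})\ge0$ forces the second sum to be strictly positive. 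Bogomolov's extraction lemma then produces an index $m$ for which the saturated subsheaf $F:=E_{m}$ has $\xi_{F}:=\tfrac{c_{1}(F)}{\rank F}-\tfrac{c_{1}(E)}{\rank E}$ satisfying $\xi_{F}^{2}>0$; moreover the Harder--Narasimhan property gives $\xi_{F}\cdot H_{0}=\mu_{H_{0}}(F)-\mu_{H_{0}}(E)>0$.

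It remains to turn $\xi_{F}^{2}>0$ into a contradiction with $\mu_{H}$-semistability for the given nef $H$. Since $\xi_{F}^{2}>0$ and $\xi_{F}\cdot H_{0}>0$ with $H_{0}$ ample, the Hodge index theorem places $\xi_{F}$ in the interior of the component of the positive cone containing the ample cone. As $H$ is nef and numerically nontrivial, it is a nonzero class in the closure of that same cone (the nef cone is the closure of the ample cone), so $H^{2}\ge0$. A short Lorentzian computation in the signature $(1,\rho-1)$ intersection form then yields $\xi_{F}\cdot H>0$ strictly, even in the boundary case $H^{2}=0$ --- this is exactly where numerical nontriviality of $H$ is used to keep the inequality strict. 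Hence $\mu_{H}(F)>\mu_{H}(E)$, contradicting $\mu_{H}$-semistability of $E$, and therefore $\Delta(E)\ge0$.

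I expect the main conceptual obstacle to be the instability extraction of the previous paragraph: in characteristic zero the inequalities $\Delta(\mathrm{gr}_{i})\ge0$ are automatic, whereas here they are precisely the nontrivial content supplied by Theorem \ref{main}, and verifying that its proof applies uniformly to all Harder--Narasimhan factors (of arbitrary rank) is the crux. The secondary technical point is the blow-down discriminant comparison $\Delta\big((\psi_{*}\mathcal F)^{\vee\vee}\big)\le\Delta(\mathcal F)$ used in the birational reduction, which must be checked with attention to the restriction of $\mathcal F$ to the exceptional curves; this step, though standard, is where one confirms that passing from $S$ to the product type model $X$ neither creates nor destroys a counterexample.
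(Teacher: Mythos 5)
Your overall plan (pull back to the product model, feed Theorem \ref{main} into a Bogomolov-instability argument, finish with a Hodge-index computation) is sensible, and your final Lorentzian step is correct as stated; but the central step, the ``extraction lemma,'' is a genuine gap. From $\Delta(E)<0$ and $\Delta(\mathrm{gr}_i)\geq 0$ your identity (whose coefficient should be $r_ir_j/\rank E$, not $r_ir_j/(\rank E)^2$ --- harmless) only yields \emph{some} pair with $\xi_{ij}^2>0$, where $\xi_{ij}=c_1(\mathrm{gr}_i)/r_i-c_1(\mathrm{gr}_j)/r_j$. These $\mathrm{gr}_i$ are subquotients, so $\mu_H$-semistability of $E$ gives no control on $\xi_{ij}\cdot H$; what you need is a \emph{saturated subsheaf} $F=E_m$, whose class $\xi_{E_m}$ is proportional to the sub-versus-quotient class $\zeta_m=c_1(E_m)/\rank E_m-c_1(E/E_m)/\rank(E/E_m)$. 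Passing from ``some $\xi_{ij}^2>0$'' to ``some $\zeta_m^2>0$'' is not formal: already for three Harder--Narasimhan factors of rank one, the squares $\zeta_1^2,\zeta_2^2$ do not span the quadratic expression $\sum_{i<j}r_ir_j\xi_{ij}^2$, so no linear-algebra identity does the job. Nor can you cite Bogomolov's instability theorem: its proof is characteristic-zero, and instability genuinely fails in characteristic $p$ (the Section \ref{S7} examples are $\mu_H$-semistable with $\Delta<0$), so it must be \emph{re-derived} from Theorem \ref{main}, which only covers the two-parameter cone of product type polarizations. This is exactly the difficulty the paper circumvents: its Theorem \ref{Bog} never extracts an instability class from a fixed HN filtration, but instead runs a change-of-polarization induction on rank via \cite[Lemma 4.C.5]{HL}, crucially using that at a wall both $E_0$ and $E/E_0$ are semistable with respect to the intermediate ample class $H_t$ --- information your fixed-$H_0$ filtration never sees. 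The same wall-crossing, run along the segment from your nef $H$ towards $H_0$ with induction on rank, would repair your argument and handle nef $H$ at the same time; in substance this is \cite[Proposition 6.2]{Langer3}, which the paper simply cites for the nef reduction.

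The birational reduction has a second gap. After pushing down to $\mathcal F'=(\psi_*\mathcal F)^{\vee\vee}$ you compare $c_1$ and $\Delta$ only against classes $\psi^*D$ pulled back from $X$; but the polarization $\phi^*H$ with respect to which $\mathcal F$ is semistable is \emph{not} of the form $\psi^*(\cdot)$ (the $\psi$-exceptional curves pair nontrivially with $\phi^*H$), so you end up with a sheaf on $X$ of negative discriminant for which no semistability with respect to any numerically nontrivial nef class on $X$ has been established --- ``a violation on $W$ descends to a violation on $X$'' is precisely the unproven point, and it is the content of \cite[Proposition 6.5]{Langer3} that the paper invokes in one line. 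A minor further slip: a divisor with pullback $p_1^*A_1+p_2^*A_2$ does not ``descend'' along the finite cover $f$; the existence of a product type ample divisor is part of the hypothesis in Theorem \ref{main}, and the paper instead pulls $E$ back to $C_1\times C_2$ (finite separable pullback preserves slope semistability, with $\Delta(f^*E)=\deg(f)\,\Delta(E)$), which is the clean way to set up your fixed polarization $H_0=p_1^*A_1+p_2^*A_2$ on the honest product.
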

In characteristic zero there are several proofs of Bogomolov's
inequality. The first proof is due to Bogomolov \cite{Bog}. The key
ingredient in his proof is that the tensor power of a semistable
vector bundle is still semistable. The second one is given by
Gieseker \cite{Gieseker} using reduction mod $p$ and estimating the
dimension of the space of sections of the Frobenius pull back of a
semistable sheaf. The third proof is transcendental, using the
Kobayashi-Hitchin correspondence between the polystability and the
existence of Hermite-Einstein metric (see \cite{Kob}).

Unfortunately, all these proofs do not work in positive
characteristic. And our proof of the above theorems is totally
different from theirs. The strategy of the proof is the following.
Firstly, we use a result of \cite{Sch} to show the equivalence
between semistability and Hilbert stability for locally free sheaves
on curves. Then we generalize the method of Cornalba-Harris
\cite{CH} to prove a high rank slope inequality for relative
semistable sheaves on a fibration (Theorem \ref{Slope}). This
inequality implies that Moriwaki's relative Bogomolov's inequality
\cite{Mo1} holds for trivial fibrations in any characteristic
(Corollary \ref{relative}). By the techniques of the changes of
polarizations in \cite[Appendix 4.C]{HL}, one obtains our main
theorems.

We exhibit the strategy of our proof in the following chain of
implications.
\begin{eqnarray*}
&&\boxed{\mbox{Semistability}}\Rightarrow\boxed{\mbox{Hilbert
stability}}\Rightarrow \boxed{\mbox{High rank slope inequality}}\\
&&\Rightarrow \boxed{\mbox{Relative Bogomolov's
inequality}}\Rightarrow \boxed{\mbox{Bogomolov's inequality}}
\end{eqnarray*}

\subsection*{Applications and open questions}
Bogomolov's inequality has many interesting applications, such as
the the positivity of adjoint linear systems (see \cite{Reider,
BS}), and vanishing theorems for semistable sheaves (see
\cite{Sun}). By Theorem \ref{Surface}, all these related results
automatically hold for product type surfaces in positive
characteristic (see Theorem \ref{Reider} and \ref{Sun}).

The authors in \cite{Bri2,AB} showed that Bogomolov's inequality for
semistable sheaves of any rank can be used to construct Bridgeland
stability conditions on surfaces. Hence Bridgeland stability
condition always exists on surfaces in characteristic zero. It is
natural to ask:
\begin{question}\label{Stab}
For a smooth projective surface in positive characteristic, is there
any Bridgeland stability condition on it?
\end{question}

Theorem \ref{Surface} can give an affirmative answer of this
question for product type surfaces (see Theorem \ref{Bri}). They are
the first examples of Bridgeland stability conditions on some
surfaces of general type in positive characteristic. Because of the
existence of counterexamples to Bogomolov's inequality, it seems
that a positive answer of Question \ref{Stab} needs a completely new
construction.

Theorem \ref{main} and \ref{Surface} inspire us to construct some
new counterexamples to Bogomolov's inequality (Theorem \ref{Count}).
Langer's \cite[Theorem 1]{Langer2} and our counterexamples lead us
to pose the below question:
\begin{question}\label{Conj}
Let $S$ be a smooth projective minimal surface defined over $k$ with
$c_1(S)^2\leq2c_2(S)$. Let $H$ be an ample divisor on $S$. Assume
that $S$ can be lifted to the ring $W_2(k)$ of Witt vectors of
length $2$. Does $\Delta(E)\geq0$ hold for any $\mu_H$-semistable
torsion free sheaf $E$?
\end{question}
We notice the condition $c_1^2\leq2c_2$ in Question \ref{Conj} is
satisfied for surfaces isogenous to a product of curves.

\subsection*{Organization of the paper}
Our paper is organized as follows. In Section \ref{S2}, we review
basic notions and properties of the classical stability for coherent
sheaves. Some results of Butler \cite{But} and M. Teixidor
\cite{Tei1} have been generalized to any characteristic which may be
of interest in other contexts. Then in Section \ref{S3} we recall
the definition and properties of Hilbert stability for locally free
sheaves on curves. In Section \ref{S4} we show the high rank slope
inequality for relative semistable sheaves (Theorem \ref{Slope}) and
the relative Bogomolov's inequality for trivial fibrations
(Corollary \ref{relative}). We prove Theorem \ref{main} and
\ref{Surface} in Section \ref{S5}. The applications of our main
theorems will be discussed in Section \ref{S6} (Theorem
\ref{Reider}, \ref{Sun} and \ref{Bri}). In Section \ref{S7}, we give
new counterexamples to Bogomolov's inequality in positive
characteristic (Theorem \ref{Count}).

\subsection*{Notation}
We work over an algebraically closed field $k$ of arbitrary
characteristic in this paper, and write $\chr(k)$ for its
characteristic. Let $X$ be a smooth projective variety defined over
$k$. We denote by $\D^b(X)$ its bounded derived category of coherent
sheaves and by $\kappa(X)$ its Kodaira dimension. $K_X$ and
$\omega_X$ denote the canonical divisor and canonical sheaf of $X$,
respectively. When $\dim X=1$, we write $g(X)$ for the genus of the
curve $X$. For a morphism $f:X\rightarrow Y$ of smooth varieties, we
denote by $K_{X/Y}$ the relative canonical divisor $K_X-f^*K_Y$ of
$f$. For a triangulated category $\mathcal{D}$, we write
$\K(\mathcal{D})$ for the Grothendieck group of $\mathcal{D}$.

We write $\ch(E)$ and $c(E)$ for the Chern character and Chern class
of a complex $E\in \D^b(X)$, respectively. We also write $H^j(F)$
($j\in \mathbb{Z}_{\geq0}$) for the cohomology groups of a sheaf
$F\in\Coh(X)$ and $h^j(F)$ for the dimension of $H^j(F)$. For a
sheaf $G\in\Coh(X)$, we denote by $G^*:=\Hom(G, \mathcal{O}_X)$ the
dual sheaf of $G$ and by $\Delta(G):=\ch^2_1(G)-2\ch_0(G)\ch_2(G)$
the discriminant of $G$. Given a complex number $z\in\mathbb{C}$, we
denote its real and imaginary part by $\Re z$ and $\Im z$,
respectively.

\subsection*{Acknowledgments}
The author is grateful to the referee for his or her valuable
comments and pointing out some errors in the earlier version of this
paper. The author would also like to thank Rong Du, Lingguang Li,
Xin L\"u, Xiaotao Sun, Wanyuan Xu, Fei Yu and Lei Zhang for their
interest and discussions. The author was supported by National
Natural Science Foundation of China (Grant No. 11771294, 11301201).

\section{Slope stability for sheaves}\label{S2}
In this section, we will review some basic properties of slope
stability for coherent sheaves, and generalize the tensor product
theorem and Butler's theorem to arbitrary characteristic.

Let $X$ be a smooth projective variety with $\dim X=n$ defined over
$k$. Let us fix a collection of nef divisors $H_1,\cdots,H_{n-1}$ on
$X$. We define the slope $\mu_{H_1,\cdots,H_{n-1}}$ of a coherent
sheaf $E\in \Coh(X)$ by
\begin{eqnarray*}
\mu_{H_1,\cdots,H_{n-1}}(E)= \left\{
\begin{array}{lcl}
+\infty,  & &\mbox{if}~\rank(E)=0,\\
&&\\
\frac{H_1\cdots H_{n-1}\ch_1(E)}{\rank(E)}, & &\mbox{otherwise}.
\end{array}\right.
\end{eqnarray*}
We write $\mu$ for $\mu_{H_1,\cdots,H_{n-1}}$ if there is no
confusion. When $H_1=\cdots=H_{n-1}=H$, we also write $\mu_H$ for
$\mu_{H_1,\cdots,H_{n-1}}$.

\begin{definition}\label{def2.1}
A coherent sheaf $E$ on $X$ is $\mu$-semistable (or semistable) if,
for all non-zero subsheaves $F\hookrightarrow E$, we have
$$\mu(F)\leq\mu(E).$$
\end{definition}

Notice that a torsion sheaf is always $\mu$-semistable and a
$\mu$-semistable sheaf is either a torsion free sheaf or a torsion
sheaf. Harder-Narasimhan filtrations (HN-filtrations, for short)
with respect to $\mu$-stability exist in $\Coh(X)$: given a non-zero
sheaf $E\in\Coh(X)$, there is a filtration
$$0=E_0\subset E_1\subset\cdots\subset E_m=E$$
such that: $G_i:=E_i/E_{i-1}$ is $\mu$-semistable, and
$\mu(G_1)>\cdots>\mu(G_m)$. We set $\mu^+(E):=\mu(G_1)$ and
$\mu^-(E):=\mu(G_m)$.

\begin{lemma}\label{lemma2.2}
If $E$ and $F$ are torsion free sheaves on $X$ with
$\mu^-(E)>\mu^+(F)$, then $\Hom(E,F)=0$.
\end{lemma}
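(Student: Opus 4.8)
The plan is to prove the vanishing $\Hom(E,F)=0$ by analyzing what the slope hypothesis forces on any nonzero morphism. Suppose $\varphi\colon E\to F$ is a nonzero homomorphism and let $G:=\im(\varphi)\subseteq F$ be its image, so $G$ is a nonzero quotient of $E$ and simultaneously a subsheaf of $F$. Since $F$ is torsion free, $G$ is also torsion free and in particular has positive rank, so its slope is finite. I would then bound $\mu(G)$ from below using that $G$ is a quotient of $E$, and from above using that $G$ is a subsheaf of $F$, and show these two bounds contradict the hypothesis $\mu^-(E)>\mu^+(F)$.

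The key step is to establish the two slope inequalities. For the subsheaf side, $G\hookrightarrow F$ is a nonzero subsheaf, and by the definition of $\mu^+$ via the Harder--Narasimhan filtration (the maximal slope among all subsheaves, realized by the first HN-quotient $G_1$ with $\mu^+(F)=\mu(G_1)$), every nonzero subsheaf of $F$ has slope at most $\mu^+(F)$; hence $\mu(G)\leq\mu^+(F)$. For the quotient side, I would use the dual characterization that $\mu^-(E)$ is the minimal slope among all nonzero quotients of $E$: if $0=E_0\subset\cdots\subset E_m=E$ is the HN-filtration with $\mu^-(E)=\mu(E_m/E_{m-1})$, then any nonzero quotient $E\twoheadrightarrow G$ satisfies $\mu(G)\geq\mu^-(E)$. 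Combining, $\mu^-(E)\leq\mu(G)\leq\mu^+(F)$, contradicting $\mu^-(E)>\mu^+(F)$, so no nonzero $\varphi$ can exist.

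The main obstacle, or rather the point requiring care, is justifying that $\mu^-$ of a sheaf is genuinely a lower bound for the slopes of all its nonzero quotients. This is the dual of the statement that $\mu^+$ is an upper bound for slopes of all subsheaves, and the cleanest way is as follows: given a nonzero quotient $E\twoheadrightarrow G$ with kernel $K$, consider the induced map from the HN-filtration, and argue by induction on the length of the filtration that $\mu(G)\geq\mu^-(E)$, using that each HN-graded piece $G_i$ is semistable with $\mu(G_i)\geq\mu^-(E)$ and that a quotient of a semistable sheaf has slope $\geq$ the semistable sheaf's slope. One must also confirm that $G$ is torsion free (so its slope is finite and the inequalities are meaningful); this follows because $G$ embeds into the torsion free sheaf $F$. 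With these standard HN properties in hand — all of which are available in the semistability framework set up earlier in this section — the argument is immediate.
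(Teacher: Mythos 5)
Your argument is correct and coincides with the standard proof of \cite[Lemma 1.3.3]{HL}, which is all the paper itself offers (it proves the lemma by citation only): pass to the image $G=\im(\varphi)$, note $G$ is torsion free since $G\subseteq F$, and squeeze $\mu^-(E)\leq\mu(G)\leq\mu^+(F)$ using the quotient and subsheaf characterizations of $\mu^-$ and $\mu^+$ from the Harder--Narasimhan filtration. Your care about the quotient-side bound and the torsion-freeness of $G$ is exactly the right point to check, and your induction on the length of the HN filtration handles it correctly.
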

\begin{proof}
See \cite[Lemma 1.3.3]{HL}.
\end{proof}

\subsection{Tensor product theorem}\label{S2.1}
Let $C$ be a smooth projective curve defined over $k$. It is well
known that if $\chr(k)=0$, then $$\mu^{\pm}(E\otimes
F)=\mu^{\pm}(E)+\mu^{\pm}(F),$$ for any locally free sheaves $E$ and
$F$ on $C$ (\cite[Lemma 2.5]{But}). This result fails when
$\chr(k)>0$ (see \cite{Gie} for counterexamples). However, we have
the following theorem:
\begin{theorem}\label{tensor}
Let $E$ and $F$ be locally free sheaves on $C$. Then
\begin{enumerate}
\item $0\leq\mu^+(E\otimes
F)-\mu^+(E)-\mu^+(F)\leq g(C)$;

\item $-g(C)\leq\mu^-(E\otimes
F)-\mu^-(E)-\mu^-(F)\leq0$;

\item $\mu^-(\otimes^mE)\geq
m\mu^-(E)-(m-1)g(C)$, for any positive integer $m$.
\end{enumerate}
\end{theorem}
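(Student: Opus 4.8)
The three parts are tightly linked, so the plan is to extract all the content from the first inequality in (1) and then obtain (2) and (3) formally. The \emph{lower} bound in (1), namely $\mu^+(E\otimes F)\ge\mu^+(E)+\mu^+(F)$, is elementary and characteristic-free: let $E^+\subseteq E$ and $F^+\subseteq F$ be the maximal destabilizing subsheaves, which are semistable of slopes $\mu^+(E)$ and $\mu^+(F)$. Since $E,F$ are locally free, tensoring the inclusions produces an inclusion $E^+\otimes F^+\hookrightarrow E\otimes F$, and on a curve slope is additive under tensor product, so $\mu(E^+\otimes F^+)=\mu^+(E)+\mu^+(F)$. As a subsheaf of $E\otimes F$ this forces $\mu^+(E\otimes F)\ge\mu^+(E)+\mu^+(F)$.

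Granting the full statement of (1), parts (2) and (3) are formal. For (2), I would use duality on the smooth curve $C$: the maximal destabilizing subsheaves are saturated, hence subbundles, so dualizing the Harder--Narasimhan filtration gives $\mu^+(E^*)=-\mu^-(E)$ and $\mu^-(E^*)=-\mu^+(E)$; combined with $(E\otimes F)^*\cong E^*\otimes F^*$, this turns (1) applied to $E^*,F^*$ into (2) for $E,F$. For (3), I would induct on $m$, writing $\otimes^mE=(\otimes^{m-1}E)\otimes E$ and applying the lower bound of (2) together with the inductive hypothesis; the genus contributions then accumulate to exactly $(m-1)g(C)$.

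It remains to prove the \emph{upper} bound $\mu^+(E\otimes F)\le\mu^+(E)+\mu^+(F)+g(C)$. First I would reduce to the case that $E$ and $F$ are semistable: filtering $E$ and $F$ by their Harder--Narasimhan filtrations endows $E\otimes F$ with a filtration whose subquotients are tensor products of semistable sheaves, and since $\mu^+$ of an extension is at most the larger of the $\mu^+$ of its sub and its quotient, $\mu^+(E\otimes F)$ is bounded by the largest $\mu^+$ among these subquotients; each such slope is in turn bounded by $\mu^+(E)+\mu^+(F)+g(C)$ once the semistable case is settled. Thus the theorem reduces to the statement that, for $A,B$ semistable, every semistable subsheaf $G\subseteq A\otimes B$ satisfies $\mu(G)\le\mu(A)+\mu(B)+g(C)$.

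The hard part is precisely this two-factor semistable estimate, and the genus must enter here, since in positive characteristic $A\otimes B$ need not be semistable and naive equality fails (\cite{Gie}). I expect the purely slope-theoretic approaches to be circular: the inclusion $G\hookrightarrow A\otimes B$ yields a nonzero map $A^*\otimes G\to B$, and semistability of $B$ gives $\mu^-(A^*\otimes G)\le\mu(B)$, but bounding $\mu^-(A^*\otimes G)$ from below is itself an instance of (2). The genus must therefore be produced cohomologically. The mechanism I would use is the interplay between slope and sections on $C$: after twisting $A$ by a line bundle so that $\mu(G)$ is pushed just above $g(C)-1$, Riemann--Roch forces $H^0(G)\ne0$, hence a nonzero section of $A\otimes B$; I would then use Serre duality together with the semistability of $A$ and $B$ (which controls $H^1$ via the vanishing $H^1(V)=0$ for semistable $V$ with $\mu(V)>2g(C)-2$, equivalently global generation above slope $2g(C)-1$) to limit the range of twists for which such a section can persist, and read off $\mu(G)\le\mu(A)+\mu(B)+g(C)$. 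Arranging this count to produce the \emph{sharp} constant $g(C)$, rather than a weaker multiple of the genus, is the delicate point, and is where I expect the real work to lie.
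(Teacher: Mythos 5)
Your scaffolding is sound and, in fact, coincides with the paper's: the lower bound in (1) by tensoring the maximal destabilizing subsheaves, (2) from (1) by dualizing (using $\mu^{\pm}(E^*)=-\mu^{\mp}(E)$ and $(E\otimes F)^*\cong E^*\otimes F^*$), and (3) from (2) by induction on $m$ are exactly the paper's deductions. You also correctly identify the one non-formal step, correctly flag that the route through $\mu^-(A^*\otimes G)$ is circular, and correctly guess the mechanism: twist so that Riemann--Roch manufactures a global section. The genuine gap is that you stop there, deferring ``the real work,'' and the tools you point at for closing it --- Serre duality and the $H^1$-vanishing for semistable sheaves of slope $>2g(C)-2$ --- are not what finishes the argument. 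Your preliminary reduction to the semistable case via Harder--Narasimhan subquotients, while valid, is also unnecessary.

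What actually closes the estimate is elementary and needs no semistability hypothesis on $E$ or $F$: interpret the section as a homomorphism and apply the standard Hom-vanishing (Lemma~\ref{lemma2.2}). Concretely, choose $L$ of degree $l$, the smallest integer with $-1\le\mu^+(E)+\mu^+(F)+l<0$, so $\mu^+(E)+\mu^+(F\otimes L)\in[-1,0)$. If $\mu^+(E\otimes F\otimes L)>g(C)-1$, a subsheaf $G$ computing $\mu^+$ satisfies $h^0(G)\ge\rank(G)\bigl(\mu(G)-g(C)+1\bigr)>0$ by Riemann--Roch, whence $0\neq H^0(E\otimes F\otimes L)=\Hom(E^*,F\otimes L)$. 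Lemma~\ref{lemma2.2} then forces $\mu^-(E^*)\le\mu^+(F\otimes L)$, i.e.\ $\mu^+(E)+\mu^+(F\otimes L)\ge0$ since $\mu^-(E^*)=-\mu^+(E)$, contradicting the choice of $l$. Therefore $\mu^+(E\otimes F\otimes L)\le g(C)-1\le g(C)+\mu^+(E)+\mu^+(F\otimes L)$, and subtracting $l$ gives the upper bound in (1). In particular the sharpness you worried about is not delicate at all: the genus enters only through the Riemann--Roch threshold $g(C)-1$, and the remaining $1$ is absorbed by the slack of at most $1$ in the integer choice of $l$. So your proposal has the right architecture and the right first move, but is missing the (short) decisive step, and as written would send you hunting for a cohomological count that the actual proof never needs.
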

\begin{proof}
It is clear that $\mu^+(E\otimes F)\geq \mu^+(E)+\mu^+(F)$. One
needs to show $$\mu^+(E\otimes F)\leq \mu^+(E)+\mu^+(F)+g(C).$$ Set
$l$ be the smallest integer that is greater than or equal to
$-\mu^+(E)-\mu^+(F)-1$. Take a line bundle $L$ on $C$ with $\deg
L=l$. One sees that
\begin{equation}\label{2.1}
-1\leq\mu^+(E)+\mu^+(F)+l=\mu^+(E)+\mu^+(F\otimes L)<0.
\end{equation}

We will prove that $\mu^+(E\otimes F\otimes L)\leq g(C)-1$. Let $G$
be a subsheaf of $E\otimes F\otimes L$ such that
$\mu(G)=\mu^+(E\otimes F\otimes L)$. By the Riemann-Roch theorem, it
follows that $h^0(G)\geq \deg G-\rank G(g(C)-1)$. Hence, if
$\mu^+(E\otimes F\otimes L)> g(C)-1$, we have $$\frac{h^0(G)}{\rank
G}\geq \mu(G)-g(C)+1>0.$$ This implies $$\hom(E^*, F\otimes L)=
h^0(E\otimes F\otimes L)\geq h^0(G)>0.$$ By Lemma \ref{lemma2.2},
one obtains $\mu^-(E^*)\leq\mu^+(F\otimes L)$. Then
$\mu^+(E)+\mu^+(F\otimes L)\geq0$, since $\mu^+(E)=-\mu^-(E^*)$. It
contradicts (\ref{2.1}). Thus $$\mu^+(E\otimes F\otimes L)\leq
g(C)-1\leq g(C)+\mu^+(E)+\mu^+(F\otimes L).$$ This implies the first
conclusion.

Since $\mu^-(E)=-\mu^+(E^*)$, $\mu^-(F)=-\mu^+(F^*)$ and
$\mu^-(E\otimes F)=-\mu^+(E^*\otimes F^*)$, one can immediately get
(2) from (1). The assertion (3) follows from (2) by induction on
$m$.
\end{proof}

\subsection{Butler's theorem}\label{S2.2}
The following is a generalization of \cite[Theorem 2.1]{But} to
arbitrary characteristic.

\begin{theorem}\label{Butler}
Let $E$ and $F$ be locally free sheaves on a smooth projective curve
$C$ defined over $k$. Assume $\mu^-(E)\geq3g(C)$ and
$\mu^-(F)\geq3g(C)$. Then the multiplication map $$H^0(E)\otimes
H^0(F)\rightarrow H^0(E\otimes F).$$ is surjective.
\end{theorem}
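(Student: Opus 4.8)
The plan is to run Butler's kernel-bundle argument, replacing every appeal to the characteristic-zero fact that tensor powers of semistable bundles remain semistable by the slope estimates of Theorem \ref{tensor}. First I would observe that $\mu^-(E)\geq 3g(C)\geq 2g(C)$ forces $E$ to be globally generated with $H^1(E)=0$, so the evaluation map fits into a short exact sequence
$$0\rightarrow M_E\rightarrow H^0(E)\otimes\mathcal{O}_C\rightarrow E\rightarrow0$$
defining the kernel bundle $M_E$. Tensoring by $F$ and taking cohomology, the multiplication map $H^0(E)\otimes H^0(F)\rightarrow H^0(E\otimes F)$ is surjective as soon as $H^1(M_E\otimes F)=0$; thus the whole theorem reduces to a single vanishing statement.

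Next I would convert that vanishing into a slope inequality. By Serre duality $H^1(M_E\otimes F)$ is dual to $\Hom(M_E\otimes F,\omega_C)$, and by Lemma \ref{lemma2.2} this group vanishes once $\mu^-(M_E\otimes F)>\deg\omega_C=2g(C)-2$. Here Theorem \ref{tensor}(2) enters: it gives $\mu^-(M_E\otimes F)\geq\mu^-(M_E)+\mu^-(F)-g(C)\geq\mu^-(M_E)+2g(C)$, using $\mu^-(F)\geq 3g(C)$. Hence it suffices to prove the single bound $\mu^-(M_E)>-2$ on the minimal slope of the kernel bundle.

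This bound on $\mu^-(M_E)$ is the heart of the matter and the step I expect to be the main obstacle. In characteristic zero one shows $M_E$ is itself semistable with $\mu(M_E)=-\mu(E)/(\mu(E)-g(C))\geq-3/2$ when $\mu(E)\geq 3g(C)$, but that proof of semistability of $M_E$ relies on stability of symmetric powers and fails in positive characteristic. Instead I would first reduce to the case that $E$ is semistable by running the evaluation construction along the Harder-Narasimhan filtration of $E$: all of its factors have slope $\geq 3g(C)$, are globally generated and have vanishing $H^1$, so taking global sections stays exact and $M_E$ acquires a filtration whose subquotients are the kernel bundles of the factors. For semistable $E$ I would then analyse a minimal-slope quotient $M_E\twoheadrightarrow Q$: forming the push-out gives a sequence $0\rightarrow Q\rightarrow P\rightarrow E\rightarrow0$ in which $P$ is a globally generated quotient of $H^0(E)\otimes\mathcal{O}_C$ with $h^0(P)=h^0(E)$ and $H^0(Q)=0$, and comparing the section count of $P$ forced by global generation against Riemann-Roch on $Q$ and on $E$ produces a numerical inequality for $\mu(Q)=\mu^-(M_E)$.

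The delicate point is that global generation of $P$ alone yields only a weak bound (degenerating badly when $Q$ has small rank relative to $g(C)\cdot\rank E$), so I expect to feed the estimates of Theorem \ref{tensor} back into this comparison in order to absorb the characteristic-$p$ defect of size $g(C)$; this is precisely why the hypothesis reads $3g(C)$ rather than the $2g(C)$ that suffices in characteristic zero. Once $\mu^-(M_E)>-2$ is established, assembling the reductions above gives the surjectivity of the multiplication map.
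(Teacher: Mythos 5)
Your overall skeleton is exactly the paper's: global generation of $E$ from $\mu^-(E)\geq 3g(C)$ gives the kernel-bundle sequence $0\rightarrow M_E\rightarrow H^0(E)\otimes\mathcal{O}_C\rightarrow E\rightarrow 0$; surjectivity of multiplication reduces to $H^1(M_E\otimes F)=0$; and Theorem \ref{tensor}(2) with $\mu^-(F)\geq 3g(C)$ reduces that vanishing to a lower bound on $\mu^-(M_E)$. But your writeup stops precisely at the step everything hinges on: you never actually establish $\mu^-(M_E)>-2$. Your sketch (Harder--Narasimhan reduction to semistable $E$, then a minimal-slope quotient, a push-out $0\rightarrow Q\rightarrow P\rightarrow E\rightarrow 0$, and a section count) ends with an admitted unresolved ``delicate point'' and the expectation of having to feed Theorem \ref{tensor} back in. As submitted, that is a genuine gap, not a proof.

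Your diagnosis of the obstacle is also misplaced, which is why the gap looks harder to you than it is. The needed bound is Butler's Corollary 1.3: $\mu^-(M_E)\geq -\mu^-(E)/(\mu^-(E)-g(C))\geq -3/2$ once $\mu^-(E)\geq 3g(C)$, and this is what the paper simply cites. Its proof is characteristic-free: the semistable case is a direct section-counting comparison (essentially the push-out argument you sketch, carried to completion), and the general case follows by the HN-filtration dévissage you describe; no stability of symmetric or tensor powers enters anywhere in that bound. The characteristic-zero ingredient in Butler's original Theorem 2.1 is instead the identity $\mu^-(A\otimes B)=\mu^-(A)+\mu^-(B)$, which is exactly what the paper's Theorem \ref{tensor} replaces at the cost of the defect $g(C)$; the hypothesis $3g(C)$ then serves double duty, absorbing that defect and improving the kernel bound from $-2$ to $-3/2$. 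With the citation (or a completed count) in place, the numbers close exactly as in the paper: $\mu^-(M_E\otimes F)\geq -\tfrac{3}{2}+3g(C)-g(C)=2g(C)-\tfrac{3}{2}>2g(C)-2$, so $h^1(M_E\otimes F)=\hom(M_E\otimes F,\omega_C)=0$, and tensoring the evaluation sequence by $F$ finishes the proof. So: right architecture, but the crux is asserted-with-a-plan rather than proved, and the perceived characteristic-$p$ failure you organize the plan around does not exist.
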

\begin{proof}
Since $\mu^-(E)\geq3g(C)$, by \cite[Lemma 1.12]{But}, one sees that
$E$ is generated by global sections. Hence the evaluation map of $E$
determines an exact sequence:
\begin{equation}\label{2.2}
0\rightarrow M_E\rightarrow H^0(E)\otimes\mathcal{O}_C\rightarrow
E\rightarrow0.
\end{equation}
From \cite[Corollary 1.3]{But}, it follows that
$$\mu^-(M_E)\geq-\frac{\mu^-(E)}{\mu^-(E)-g(C)}\geq-\frac{3}{2}.$$
By Lemma \ref{tensor}, we have
$$\mu^-(M_E\otimes F)\geq2g(C)-\frac{3}{2}>2g(C)-2.$$
This implies $$h^1(M_E\otimes F)=\hom(M_E\otimes F, \omega_C)=0.$$
Tensoring sequence (\ref{2.2}) by $F$ and taking cohomology proves
the theorem.
\end{proof}

From this, we can deduce the following generalization of a result in
\cite{Tei1}.

\begin{theorem}\label{exterior}
Let $E$ be a semistable locally free sheaf of rank $r$ on a smooth
projective curve $C$ defined over $k$. If $\mu(E)\geq3g(C)$, then
the map $$\wedge^rH^0(E)\rightarrow H^0(\wedge^rE)$$ is surjective.
\end{theorem}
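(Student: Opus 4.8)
The plan is to prove, more generally by induction on $s$, that the wedge map $\wedge^s H^0(E)\to H^0(\wedge^s E)$ is surjective for every $1\le s\le r$; the case $s=r$ is the assertion, and the case $s=1$ is trivial. For the inductive step I would compare the two sides through the commuting square
\begin{CD}
\wedge^{s-1}H^0(E)\otimes H^0(E) @>>> \wedge^s H^0(E)\\
@VVV @VVV\\
H^0(\wedge^{s-1}E)\otimes H^0(E) @>>> H^0(\wedge^s E),
\end{CD}
in which the lower horizontal arrow is the cup product $H^0(\wedge^{s-1}E)\otimes H^0(E)\to H^0(\wedge^{s-1}E\otimes E)$ followed by the map $m_*\colon H^0(\wedge^{s-1}E\otimes E)\to H^0(\wedge^s E)$ induced by wedge multiplication $m\colon\wedge^{s-1}E\otimes E\to\wedge^s E$. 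The top arrow is surjective by linear algebra, the left vertical arrow is surjective by the inductive hypothesis, and the cup product is surjective by Theorem \ref{Butler}. Chasing the diagram shows that the image of $\wedge^s H^0(E)\to H^0(\wedge^s E)$ coincides with the image of $m_*$, so the statement reduces to the surjectivity of $m_*$.

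To invoke Theorem \ref{Butler} I must first verify $\mu^-(\wedge^{s-1}E)\ge 3g(C)$. Since $E$ is semistable, $\mu^-(E)=\mu(E)\ge 3g(C)$; the natural surjection $E^{\otimes(s-1)}\twoheadrightarrow\wedge^{s-1}E$ together with Theorem \ref{tensor}(3) and the fact that a quotient has $\mu^-$ at least that of the source gives $\mu^-(\wedge^{s-1}E)\ge\mu^-(E^{\otimes(s-1)})\ge(s-1)\mu^-(E)-(s-2)g(C)\ge(2s-1)g(C)\ge 3g(C)$. The same estimates, via Theorem \ref{tensor}(2), yield $\mu^-(\wedge^{s-1}E\otimes E)\ge(2s+1)g(C)>2g(C)-2$.

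For the surjectivity of $m_*$ it suffices, by the long exact sequence of $0\to K_s\to\wedge^{s-1}E\otimes E\xrightarrow{m}\wedge^s E\to 0$, to show $H^1(K_s)=0$, and by Serre duality and Lemma \ref{lemma2.2} this holds once $\mu^-(K_s)>2g(C)-2$. When $\chr(k)\nmid s$ the comultiplication $\wedge^s E\to\wedge^{s-1}E\otimes E$ splits $m$ up to the invertible scalar $\pm s$, so the sequence splits and $K_s$ is a direct summand, hence a quotient, of $\wedge^{s-1}E\otimes E$; therefore $\mu^-(K_s)\ge\mu^-(\wedge^{s-1}E\otimes E)>2g(C)-2$. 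For $s=r$ this can be seen concretely: writing $\wedge^{r-1}E\otimes E\cong (E^*\otimes E)\otimes\det E$, the map $m_*$ becomes the trace tensored with $\det E$, and when $\chr(k)\nmid r$ any section $t$ of $\det E$ has preimage $\tfrac1r\,\mathrm{id}_E\otimes t$ built from the identity endomorphism. This disposes of every case with $\chr(k)\nmid s$.

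The hard part is exactly the case $\chr(k)=p$ with $p\mid s$: there $m\circ c=\pm s=0$, the comultiplication no longer splits $m$, and $K_s$ is a genuine subbundle whose minimal slope is not controlled by its being a subsheaf. I would replace the splitting by the characteristic-free divided-power/Koszul exact sequences for $K_s$. In the first instance $s=p=2$ one has $0\to F^*E\to K_2\to\wedge^2 E\to 0$ with $F$ the Frobenius, and a lower bound for $\mu^-(F^*E)$, coming from the bounded instability of Frobenius pullbacks together with $\mu^-(E)\ge 3g(C)$, again forces $\mu^-(K_2)>2g(C)-2$. I expect the general $p\mid s$ situation to be the main obstacle, to be handled by combining such Frobenius/divided-power filtrations with the tensor-power slope estimates of Theorem \ref{tensor} so as to guarantee $\mu^-(K_s)>2g(C)-2$; once this vanishing is established, the diagram chase above completes the induction and hence the proof.
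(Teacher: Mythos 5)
Your induction through \emph{all} intermediate exterior powers is exactly what creates the problem, and the problem is not solved in your proposal: to run the inductive step you must prove $H^1(K_s)=0$ for $K_s=\ker\bigl(\wedge^{s-1}E\otimes E\to\wedge^s E\bigr)$ for every $2\leq s\leq r$, and when $\chr(k)=p$ divides $s$ you have, as you yourself say, no argument. The comultiplication splitting is unavailable there, $K_s$ is only a subsheaf of $\wedge^{s-1}E\otimes E$, and a subsheaf's $\mu^-$ is not bounded below by that of the ambient bundle, so the slope estimates from Theorem \ref{tensor} give you nothing about $\mu^-(K_s)$. Your sketch for $s=p=2$ is repairable (though the extension is written in the wrong order: in characteristic $2$ the kernel of $E\otimes E\to\wedge^2E$ is the sheaf of symmetric invariants, which contains $\wedge^2E$ as the \emph{subsheaf} and has the Frobenius twist $F^*E$ as the \emph{quotient}; the bound $\mu_{\min}(F^*E)\geq p\,\mu_{\min}(E)-(p-1)(2g-2)$ then does suffice), but for general $s$ divisible by $p$ — and a fortiori by higher powers of $p$ — the relevant filtrations involve more complicated divided-power/Schur-type functors and no slope control is provided. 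So the proposal is incomplete at precisely its crux.

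The paper avoids this case division entirely by never touching the intermediate wedges: only the top exterior power is needed, and $\wedge^rE=\det E$ is a \emph{line bundle}. It first shows $\otimes^rH^0(E)\to H^0(\otimes^rE)$ is surjective by iterating Theorem \ref{Butler}, using Theorem \ref{tensor}(3) to keep $\mu^-(\otimes^jE)\geq3g(C)$, and then kills the kernel of $\tau:\otimes^rE\to\wedge^rE$ via the Koszul complex of a surjection onto a line bundle,
$$\cdots\rightarrow \wedge^2(\otimes^rE)\otimes(\det E)^{-1}\rightarrow \otimes^rE\xrightarrow{\tau}\wedge^rE\rightarrow0,$$
which is exact in \emph{every} characteristic. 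Hence $\ker\tau$ is a \emph{quotient} of $\wedge^2(\otimes^rE)\otimes(\det E)^{-1}$, whose minimal slope is at least $r\mu(E)-(2r-1)g(C)\geq(r+1)g(C)>2g(C)-2$ by Theorem \ref{tensor}(3), giving $H^1(\ker\tau)=0$ with no reference to $\chr(k)$; the universal property of the exterior algebra then factors the resulting surjection $\otimes^rH^0(E)\to H^0(\wedge^rE)$ through $\wedge^rH^0(E)$. To salvage your induction you would need an analogous characteristic-free presentation of each $K_s$ by bundles of controlled minimal slope, which is exactly the ingredient your proposal lacks — and which is unnecessary once one works only at the top wedge.
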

\begin{proof}
From Theorem \ref{tensor} and \ref{Butler}, one infers that the map
$$\otimes^rH^0(E)\rightarrow H^0(\otimes^rE)$$ is surjective. On the
other hand, the canonical map $\tau:\otimes^rE\rightarrow \wedge^rE$
gives rise to an exact Koszul complex: $$\cdots\rightarrow
\wedge^2(\otimes^rE)\otimes\wedge^rE^*\rightarrow
\otimes^rE\stackrel{\tau}{\longrightarrow} \wedge^rE\rightarrow0.$$
Since $\wedge^2(\otimes^rE)\otimes\wedge^rE^*$ is a quotient of
$(\otimes^{2r}E)\otimes\wedge^rE^*$, one sees that
\begin{eqnarray*}
\mu^-\left(\wedge^2(\otimes^rE)\otimes\wedge^rE^*\right)
&\geq&\mu^-\left((\otimes^{2r}E)\otimes\wedge^rE^* \right)\\
&=&\mu^-(\otimes^{2r}E)-\deg E\\
&\geq&2r\mu^-(E)-(2r-1)g(C)-\deg E\\
&=&r\mu(E)-(2r-1)g(C)\\
&\geq&(r+1)g(C)\\
&\geq&2g(C).
\end{eqnarray*}
This infers $H^1(\wedge^2(\otimes^rE)\otimes\wedge^rE^*)=0$. Thus
$H^1(\ker \tau)=0$ and the map $$H^0(\otimes^rE)\rightarrow
H^0(\wedge^rE)$$ is surjective. It follows that the composite map
$$\otimes^rH^0(E)\rightarrow H^0(\wedge^rE)$$ is also surjective. By the universal
property of the exterior algebra, we obtain the desired surjection.
\end{proof}

\begin{remark}
One sees that the bound $3g(C)$ in Theorem \ref{Butler} can be
slightly improved by its proof. But we don't need this.
\end{remark}

\section{Hilbert stability for locally free sheaves}\label{S3}
Throughout this section, we let $E$ be a semistable locally free
sheaf on a smooth projective curve $C$ defined over $k$ with $\deg
E=d$ and $\rank E=r$. We further assume that $\mu(E)\geq 3g(C)$ and
set $V=H^0(E)$. We will recall the definition and some basic
properties of Hilbert stability for such an $E$ on $C$.

By our assumptions, one sees that the evaluation map $V\otimes
\mathcal{O}_C\rightarrow E$ is surjective, and it defines a morphism
$$f:C\rightarrow \mathbb{G}(V, r),$$ here $\mathbb{G}(V, r)$ is the
Grassmannian of $r$ dimensional quotients of $V$. Let $$p:
\mathbb{G}(V, r)\hookrightarrow \mathbb{P}(\wedge^rV)$$ be the
Pl\"ucker embedding, where $\mathbb{P}(\wedge^rV)$ is the projective
space of $1$ dimensional quotients of $\wedge^rV$.

\begin{lemma}
The morphisms $f$ and $p\circ f$ are embeddings.
\end{lemma}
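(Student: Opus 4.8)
The plan is to show that both $f$ and the composite $p\circ f$ are embeddings, which for morphisms of projective varieties means verifying that each is injective on points and injective on tangent spaces (i.e.\ a closed immersion, since $C$ is smooth projective and the targets are projective). Since $p$ is a closed immersion by construction (the Pl\"ucker embedding), it suffices to prove that $f$ is an embedding: the composite of an embedding with a closed immersion is again an embedding. So I would reduce the lemma to the single statement that $f\colon C\to\mathbb{G}(V,r)$ is a closed immersion.

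To prove injectivity on points, I would argue by contradiction. Suppose $f(x)=f(y)$ for distinct points $x,y\in C$. The morphism $f$ is defined so that the fibre $E\otimes k(x)$ is the quotient of $V=H^0(E)$ at $x$; thus $f(x)=f(y)$ means the two evaluation maps $V\to E\otimes k(x)$ and $V\to E\otimes k(y)$ have the same kernel, namely $H^0(E\otimes I_{x})=H^0(E\otimes I_y)$, where $I_x$, $I_y$ are the ideal sheaves of the points. The key is to produce a section in one kernel but not the other, which amounts to showing that global sections of $E$ separate points. Concretely, I would show that $E\otimes I_x$ is still globally generated, or at least that $H^0(E\otimes I_x)\subsetneq H^0(E\otimes I_y)$ fails, by a Riemann--Roch / vanishing argument: twisting down by a point drops the slope by only $1/r$, so the hypothesis $\mu(E)\ge 3g(C)$ leaves $\mu^-(E\otimes I_x)$ comfortably above $2g(C)-2$, forcing $H^1(E\otimes I_x)=0$ and hence $h^0(E\otimes I_x)=h^0(E)-r$. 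This dimension count guarantees that sections separate $x$ from $y$.

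For injectivity on tangent vectors (unramifiedness), I would run the same circle of ideas one order higher, replacing $I_x$ by $I_x^2$ (or $\mathcal{O}_C(-2x)$ in the line-bundle factors), so that $f$ separates a point from its tangent direction. Again the relevant sheaf $E\otimes\mathcal{O}_C(-2x)$ has slope $\mu(E)-2/r \ge 3g(C)-2/r > 2g(C)-2$ after passing to $\mu^-$, so the cohomology vanishing from Theorem~\ref{Butler}'s underlying estimates (Riemann--Roch plus semistability) applies and gives the expected drop $h^0(E\otimes\mathcal{O}_C(-2x))=h^0(E)-2r$. This shows the map on tangent spaces is injective, completing the proof that $f$ is a closed immersion.

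The main obstacle, and the reason the hypothesis $\mu(E)\ge 3g(C)$ is invoked, is getting the cohomology vanishing $H^1$ of the twisted-down sheaves to hold in \emph{arbitrary} characteristic without appealing to any characteristic-zero input. The cleanest route is to control $\mu^-(E\otimes\mathcal{O}_C(-2x))$ directly from semistability of $E$: since $E$ is semistable, $\mu^-(E)=\mu(E)$, and tensoring by a line bundle shifts the slope uniformly, so $\mu^-(E\otimes\mathcal{O}_C(-2x))=\mu(E)-2\ge 3g(C)-2 > 2g(C)-2$. By Serre duality $H^1(E\otimes\mathcal{O}_C(-2x))=\Hom(E\otimes\mathcal{O}_C(-2x),\omega_C)^\vee$, and the slope inequality combined with Lemma~\ref{lemma2.2} forces this to vanish. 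I expect the delicate point to be bookkeeping the exact numerical thresholds so that every twist stays in the globally-generated, $H^1$-vanishing range; once those inequalities are pinned down, separation of points and tangents is a formal consequence.
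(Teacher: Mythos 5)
Your proposal is correct in outline, but it runs in the opposite direction from the paper and by genuinely different means. The paper proves first that $p\circ f$ is an embedding: $p\circ f$ is induced by the surjection $\wedge^rV\otimes\mathcal{O}_C\rightarrow\wedge^rE$, the line bundle $\wedge^rE=\det E$ has degree $r\mu(E)\geq 3rg(C)\geq 2g(C)+1$ and is therefore very ample, and Theorem \ref{exterior} (the characteristic-free ingredient, proved via Theorems \ref{tensor} and \ref{Butler}) gives surjectivity of $\wedge^rV\rightarrow H^0(\wedge^rE)$, so $p\circ f$ factors as the embedding by the complete linear system of $\det E$ followed by a linear closed immersion of $\mathbb{P}(H^0(\det E))$ into $\mathbb{P}(\wedge^rV)$; that $f$ is an embedding is then \emph{deduced} from $p\circ f$ being one. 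You instead prove directly that $f$ separates points and tangent vectors, using $H^1(E(-x-y))=0$ (for $x,y$ possibly equal), which follows from semistability of twists, Serre duality and Lemma \ref{lemma2.2}, and then get $p\circ f$ for free since the Pl\"ucker map is a closed immersion. Your route buys elementarity: it avoids Theorem \ref{exterior} and the tensor-product estimates altogether, at the cost of invoking the (standard) facts that a proper morphism injective on points and tangent vectors is a closed immersion and that $f(x)$ is determined by the kernel of $V\rightarrow E\otimes k(x)$; the paper's route is a two-line deduction given Theorem \ref{exterior}, which it must prove anyway as the key input to Theorem \ref{Schmitt}. Two slips you should repair: twisting by $\mathcal{O}_C(-x)$ (resp.\ $\mathcal{O}_C(-2x)$) drops the slope by $1$ (resp.\ $2$), not $1/r$ (resp.\ $2/r$) --- your final inequality $\mu^-(E(-2x))=\mu(E)-2\geq 3g(C)-2>2g(C)-2$ is the correct statement and is what the argument actually uses; and for two \emph{distinct} points the count $h^0(E(-x))=h^0(E)-r$ alone does not rule out $H^0(E(-x))=H^0(E(-y))$, so you genuinely need $h^0(E(-x-y))=h^0(E)-2r$ (equivalently, global generation of $E(-x)$ at $y$), which your parenthetical ``$E\otimes I_x$ is still globally generated'' does supply. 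Finally, note that both your argument and the paper's tacitly require $g(C)\geq1$ (or $\deg E$ large): for $g(C)=0$ and $\mu(E)=0$ the sheaf $E$ is trivial and $f$ is constant, so the strict inequalities (and the very ampleness of $\det E$) are where the hypothesis $\mu(E)\geq 3g(C)$ earns its keep.
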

\begin{proof}
Note that the morphism $p\circ f: C\rightarrow\mathbb{P}(\wedge^rV)$
is defined by the surjection
$$\wedge^rV\otimes \mathcal{O}_C\rightarrow
\wedge^rE.$$ Since $\mu(E)\geq 3g(C)$, one sees $\wedge^r E$ is a
very ample line bundle, and the map
$$\wedge^rV\rightarrow H^0(\wedge^rE)$$ is
surjective by Theorem \ref{exterior}. We deduce that $p\circ f$ is
an embedding. Thus $f$ is an embedding.
\end{proof}

We let $\mathcal{I}_C$ be the ideal sheaf of $C$ in
$\mathbb{P}(\wedge^rV)$. Since $H^1(\mathcal{I}_C(m))=0$ for $m$
large enough, from the short exact sequence
$$0\rightarrow\mathcal{I}_C(m)\rightarrow\mathcal{O}_{\mathbb{P}(\wedge^rV)}(m)\rightarrow\mathcal{O}_C(m)\rightarrow0,$$
one sees that the map
$H^0(\mathcal{O}_{\mathbb{P}(\wedge^rV)}(m))\rightarrow
H^0(\mathcal{O}_C(m))$ is surjective. Hence we obtain a surjection
$$\psi_m:S^m(\wedge^rV)\rightarrow H^0((\det E)^{\otimes m}).$$
Let $P(m)=h^0((\det E)^{\otimes m})=dm-g(C)+1$. One finally obtains
a map
$$\varphi_m:\wedge^{P(m)}S^m(\wedge^rV)\rightarrow
\wedge^{P(m)}H^0((\det E)^{\otimes m})\cong k.$$ It gives a point
$$[\varphi_m]\in \mathbb{P}\left(\wedge^{P(m)}
S^m(\wedge^rV)\right)$$

\begin{definition}
We say $(C,E)$ is $m$-Hilbert stable (resp., semistable) if the
point $[\varphi_m]$ is stable (resp., semistable) under the induced
action of $SL(V)$, i.e., $[\varphi_m]$ has closed orbit and finite
stabilizer (resp., $0$ is not in the closure of the orbit of
$[\varphi_m]$). We say $(C,E)$ is Hilbert stable (resp., semistable)
if it is $m$-Hilbert stable (resp., semistable) for all $m$
sufficiently large.
\end{definition}

Recall that a necessary and sufficient condition for the
semistability of $[\varphi_m]$ is the existence of a
$SL(V)$-invariant non-constant homogeneous polynomial $$h\in
S^N\left(\wedge^{P(m)}S^m(\wedge^rV)\right)$$ such that
$(S^N\varphi_m)(h)\neq0$.

\begin{theorem}\label{Schmitt}
There is a constant $d_0=d_0(r, g(C))$ so that for each $d\geq d_0$,
there exists a constant $m_0=m_0(d,r, g(C))$ such that if $m\geq
m_0$, then $(C,E)$ is $m$-Hilbert stable (resp., semistable) if and
only if $E$ is stable (resp., semistable).
\end{theorem}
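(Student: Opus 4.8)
The plan is to establish both equivalences through the Hilbert--Mumford numerical criterion applied to the $SL(V)$-action on $\mathbb{P}(\wedge^{P(m)}S^m(\wedge^r V))$. By that criterion, $[\varphi_m]$ is semistable (resp.\ stable) if and only if for every nontrivial one-parameter subgroup $\lambda:\mathbb{G}_m\to SL(V)$ the minimal $\lambda$-weight $w_m(\lambda)$ occurring in $[\varphi_m]$ satisfies $w_m(\lambda)\le 0$ (resp.\ $<0$), the normalization $\sum$ (weights) $=0$ coming from $SL(V)$. So the entire argument reduces to computing $w_m(\lambda)$ and reading off its sign, uniformly in $\lambda$.

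First I would translate a one-parameter subgroup into sheaf-theoretic data. A $\lambda$ with integer weights $w_1\le\cdots\le w_{\dim V}$ produces a weighted filtration $0=V_0\subsetneq V_1\subsetneq\cdots\subsetneq V_s=V$ of $V=H^0(E)$, and each step generates a subsheaf $E_i:=\im(V_i\otimes\mathcal O_C\to E)\subseteq E$. The weight $w_m(\lambda)$ of the maximal-minor point $[\varphi_m]$ is, by definition, the minimum total $\lambda$-weight over monomial bases of $S^m(\wedge^r V)$ whose images span $H^0(L^{\otimes m})$, where $L=\det E$. Choosing the monomial basis compatible with the filtration identifies this minimum with an explicit expression built from the ranks of the $E_i$ and the numbers $h^0\big((\det E_i)^{\otimes m}\big)$.

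The core of the proof is then an asymptotic estimate in $m$. For the target, Theorems \ref{Butler} and \ref{exterior} (whose hypotheses hold since $\mu(E)\ge 3g(C)$) give surjectivity of $\psi_m$ and $\dim H^0(L^{\otimes m})=P(m)=dm-g(C)+1$, as already established. For the filtration steps, each $\det E_i$ is a quotient determinant of $V_i\otimes\mathcal O_C$, hence a globally generated line bundle of degree $\deg E_i\ge 0$, so $h^0\big((\det E_i)^{\otimes m}\big)=m\deg E_i+O(1)$ with error bounded in terms of $g(C)$ alone. Substituting these into the weight formula, the leading term of $w_m(\lambda)$ in $m$ is a positive multiple of a sum, over the filtration, of comparisons between $\mu(E_i)$ and $\mu(E)$; its sign is therefore governed exactly by the slope inequalities $\mu(E_i)\le\mu(E)$ (resp.\ $<$) that define slope semistability (resp.\ stability) of $E$. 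Thus the sign of the dominant term of $w_m(\lambda)$ matches the stability of $E$ for every $\lambda$.

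Finally I would fix the two constants. The bound $d\ge d_0(r,g(C))$ is chosen so that $\mu(E)=d/r\ge 3g(C)$, enabling the generation and surjectivity inputs above; the second constant $m_0(d,r,g(C))$ is then chosen so that for $m\ge m_0$ the leading term of $w_m(\lambda)$ dominates the bounded lower-order corrections, so that the sign of $w_m(\lambda)$ is determined by its leading coefficient for all $\lambda$ simultaneously. I expect the main obstacle to be precisely this uniform asymptotic control: converting the global-generation and surjectivity statements into error bounds on $h^0\big((\det E_i)^{\otimes m}\big)$ that are uniform over all weighted filtrations of $V$, and verifying that no lower-order term can reverse the sign dictated by the slope comparison --- which is exactly why one first forces $d\ge d_0$ and only then $m\ge m_0$. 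Alternatively, and more economically, the theorem can be deduced directly from the general Hilbert-stability criterion of Schmitt \cite{Sch}, once one checks that the present construction of $[\varphi_m]$ is the special case of his setup.
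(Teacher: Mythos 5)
Your main plan --- proving Theorem \ref{Schmitt} by a direct Hilbert--Mumford weight computation --- is a genuinely different route from the paper's, and it has a real gap at its central step. The paper computes no weights at all: it observes that the equivalence is already known --- rank two by Gieseker--Morrison \cite{GM}, one of the two implications in general rank by \cite[Proposition 2.2]{Tei3}, and the full equivalence by Schmitt \cite{Sch} in characteristic zero --- and that the \emph{only} place characteristic zero enters Schmitt's argument is \cite[Corollary 1.1.2]{Sch}, namely the surjectivity of $\wedge^r H^0(E)\rightarrow H^0(\wedge^r E)$, which the paper re-proves in arbitrary characteristic as Theorem \ref{exterior} (resting on Theorems \ref{tensor} and \ref{Butler}). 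Your closing sentence gestures at exactly this alternative, but you treat it as a formality: ``once one checks that the present construction of $[\varphi_m]$ is the special case of his setup.'' That check is not the issue; the issue is that Schmitt's proof is a characteristic-zero proof, and locating and replacing its single characteristic-zero input is the entire content of the paper's argument. As stated, your alternative would simply import a theorem that is not available over the field at hand.

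As for the direct computation you sketch: the Hilbert--Mumford criterion itself is valid for $SL(V)$ in any characteristic, so the framework is fine, but the assertion that ``the leading term of $w_m(\lambda)$ is a positive multiple of a sum of comparisons between $\mu(E_i)$ and $\mu(E)$'' is precisely the hard combinatorial core, not a routine substitution. The minimal weight over monomial bases is governed not just by $h^0\big((\det E_i)^{\otimes m}\big)$ for the individual filtration steps, but by the dimensions of the images in $H^0(L^{\otimes m})$ of all \emph{mixed} products of wedges drawing sections from different $V_i$'s; bounding the minimal weight from above (the hard direction, slope semistable $\Rightarrow$ Hilbert semistable) requires surjectivity of multiplication maps compatible with the weight filtration, uniformly over all weighted filtrations of $V$. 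In characteristic zero such generation statements follow from the tensor-product theorem; in characteristic $p$ only the weaker bounds of Theorem \ref{tensor}, with error terms of size $g(C)$, are available, and in the strictly semistable case --- where your slope comparisons vanish and the sign of $w_m(\lambda)$ must still come out right --- nothing in your sketch rules out those error terms reversing the sign. This is exactly why the rank-two case already occupies the whole of \cite{GM}, and why for general rank Schmitt avoided the direct estimate by instead proving the equivalence of Hilbert and Mumford stability. Your normalization $d_0=3g(C)r$ (so that $\mu(E)\geq 3g(C)$) is correct, and the easy implication via the special one-parameter subgroups attached to destabilizing subsheaves is fine, but as written the proposal does not prove the hard implication.
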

\begin{proof}
The required conclusion was first proved for rank 2 by Gieseker and
Morrison in \cite[Theorem 1.1]{GM}(see also \cite{Tei2} for a
different proof). For general rank, one of the implications was
proved in \cite[Proposition 2.2]{Tei3}, and the equivalence was
given by Schmitt \cite{Sch} in characteristic zero. The
characteristic 0 assumption in Schmitt's proof is only used in
\cite[Corollary 1.1.2]{Sch} which has been generalized to arbitrary
characteristic case in Theorem \ref{exterior}. Hence Schmitt's proof
works in any characteristic.
\end{proof}

\section{High rank slope inequalities}\label{S4}
In this section, we will prove the high rank slope inequality and
relative Bogomolov's inequality for a trivial fibration. Throughout
this section, we let $\pi: X\rightarrow Y$ be a flat, projective,
and surjective morphism of smooth projective varieties over $k$ with
$\dim X=n$ and $\dim Y=n-1$. Let $y$ be a general point of $Y$. We
further assume that the general fiber $X_y:=X\times_Y\Spec(k(y))$ is
a connected smooth curve of genus $g$. For a sheaf $E$ on $X$, we
write $E_y$ for the restriction of $E$ to $X_y$.

The following is a high rank generalization of \cite[Theorem
1.1]{CH} (see also \cite[Theorem 1.5]{Sto}).

\begin{theorem}\label{CH}
Let $E$ be a reflexive sheaf on $X$ such that $E_y$ is semistable
and $\mu(E_y)\geq3g$. Suppose that $(X_y, E_y)$ is $m$-Hilbert
semistable for some positive integer $m$. Set $\det E=L$, $\rank
E=r$ and $P(m)=\rank\pi_*(L^{m})$. Let $D_m(E)$ be the line bundle
$$(\det\pi_*(L^m))^{\rank \pi_*E}\otimes(\det\pi_*E)^{-P(m)mr}.$$ Then there is a positive integer $N$
such that $(D_m(E))^N$ is effective.
\end{theorem}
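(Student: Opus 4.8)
The plan is to globalize the Cornalba--Harris GIT argument over the base $Y$, converting the fiberwise $m$-Hilbert semistability of $(X_y,E_y)$ into a nonzero global section of $D_m(E)^N$. Throughout, write $v:=\rank\pi_*E$ and $V:=H^0(E_y)$ for a general $y$, so that $\dim V=v$ after cohomology and base change.

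\textbf{Relative construction.} First I would choose a dense open $U\subseteq Y$ over which $\pi$ is smooth with fibers smooth curves of genus $g$ and over which, using that $E$ is reflexive and that $E_y$ is semistable with $\mu(E_y)\geq 3g$ (so $h^1(E_y)=0$ and $h^1((\det E_y)^{\otimes m})=0$), the pushforwards $\mathcal V:=\pi_*E$ and $\pi_*(L^m)$ are locally free of ranks $v$ and $P(m)$ and commute with base change to each fiber. On $U$ the relative evaluation $\pi^*\mathcal V\to E$ is fiberwise surjective, so the fiberwise maps $\psi_m$ of Section~\ref{S3} assemble into a morphism of vector bundles $\Psi_m\colon S^m(\wedge^r\mathcal V)\to\pi_*(L^m)$, and taking top exterior powers gives $\Phi_m:=\wedge^{P(m)}\Psi_m\colon \mathcal W\to\det\pi_*(L^m)$, where $\mathcal W:=\wedge^{P(m)}S^m(\wedge^r\mathcal V)$. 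By construction $\Phi_m$ restricts on each fiber $X_y$, $y\in U$, to the map $\varphi_m$ of Section~\ref{S3}.

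\textbf{The invariant and its weight.} Since $(X_y,E_y)$ is $m$-Hilbert semistable, the point $[\varphi_m]$ is $SL(V)$-semistable, so by the GIT criterion recalled before Theorem~\ref{Schmitt} there are a positive integer $N$ and an $SL(V)$-invariant $h\in S^N(\wedge^{P(m)}S^m(\wedge^rV))$ with $(S^N\varphi_m)(h)\neq 0$. The numerical input I need is the weight of the center $\mathbb{G}_m\subset GL(V)$ on $W:=\wedge^{P(m)}S^m(\wedge^rV)$: scaling $V$ by $t$ scales $\wedge^rV$ by $t^r$, hence $S^m(\wedge^rV)$ by $t^{mr}$ and $W$ by $t^{P(m)mr}$. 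Thus $h$ has central weight $w:=NP(m)mr$. Passing to $h^{\otimes v}\in S^{Nv}(W)$ makes the $GL(V)$-transformation law the integral character $\det^{w}$; equivalently, there is a $GL(V)$-equivariant inclusion of the one-dimensional representation $\det^{w}$ into $S^{Nv}(W)$ sending $1\mapsto h^{\otimes v}$.

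\textbf{Globalize and conclude.} Applying the associated-bundle construction for the frame bundle of $\mathcal V$ to this equivariant inclusion produces over $U$ a bundle map $(\det\mathcal V)^{w}\hookrightarrow S^{Nv}\mathcal W$, i.e.\ a section $\tilde h\in H^0\big(U,\,S^{Nv}\mathcal W\otimes(\det\pi_*E)^{-w}\big)$. Composing with $S^{Nv}\Phi_m\colon S^{Nv}\mathcal W\to(\det\pi_*(L^m))^{Nv}$ gives a section of
\[
(\det\pi_*(L^m))^{Nv}\otimes(\det\pi_*E)^{-w}=D_m(E)^N .
\]
Because the construction is compatible with restriction to a fiber, this section restricts on $X_y$ to $(S^{Nv}\varphi_m)(h^{\otimes v})=\big((S^N\varphi_m)(h)\big)^{v}\neq 0$, so it is nonzero on $U$. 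Finally, as $Y$ is smooth the coherent sheaves $\pi_*E$ and $\pi_*(L^m)$ have determinant line bundles on all of $Y$, so $D_m(E)^N$ is a genuine line bundle; since every map above is a morphism of coherent sheaves on $Y$, the section extends to a nonzero global section, giving the asserted effectivity. I expect the two technical hearts of the argument to be (i) the cohomology-and-base-change bookkeeping ensuring that $\Phi_m$ really restricts to the Section~\ref{S3} map $\varphi_m$ fiberwise, so that nonvanishing is inherited from a single fiber, and (ii) the passage from a section on $U$ to a genuine global section of $D_m(E)^N$, i.e.\ controlling the construction over the locus where $\pi_*E$ or $\pi_*(L^m)$ drop rank, since a na\"ive argument yields only a rational section.
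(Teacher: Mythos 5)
Your GIT core is sound and is essentially the paper's own argument: the central weight computation $w=NP(m)mr$ on $\wedge^{P(m)}S^m(\wedge^rV)$, the globalization of the invariant via the frame bundle of $\mathcal V=\pi_*E$, and the nonvanishing check by restriction to a general fiber all match the paper, which handles the character bookkeeping slightly differently (deducing from the fact that every character of $GL(V)$ is a power of $\det$ that $N_0P(m)mr$ is divisible by $\dim V$, rather than passing to $h^{\otimes v}$ — the two devices are equivalent). The genuine gap is the extension step, which you correctly flag as your technical heart (ii) but do not close. Your open set $U$ is cut out by conditions — local freeness of $\pi_*E$ and $\pi_*(L^m)$, cohomology and base change on \emph{every} fiber, $h^1(E_y)=0$, semistability of $E_y$ — that are only generic and can fail along divisors of $Y$ (e.g.\ the jumping locus of $h^0(E_y)$, or the locus where $E_y$ is unstable). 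A nonzero section of $D_m(E)^N$ over a dense open with divisorial complement is merely a rational section and does not give effectivity. Your stated justification, that ``every map above is a morphism of coherent sheaves on $Y$,'' fails precisely for $\tilde h$: the associated-bundle construction applied to the frame bundle of $\mathcal V$ exists only where $\mathcal V$ is locally free, so $\tilde h$ is a priori defined only on $U$, and nothing in your argument extends it.

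The missing idea is a codimension count, and it is the actual role of the reflexivity hypothesis, which your proof uses only to get generic local freeness. Since $E$ is reflexive, its non-locally-free locus $W\subset X$ has codimension $\geq3$, so $\dim\pi(W)\leq n-3=\dim Y-2$; and by \cite[Corollary 1.7]{Hart2} the sheaves $\pi_*E$ and $\pi_*(L^m)$ are themselves reflexive, hence locally free off codimension $\geq3$ in $Y$. One can therefore choose $U$ with $\codim(Y\setminus U)\geq2$. Note that on such a $U$ you must \emph{not} demand base change everywhere (that condition can fail in codimension one and would shrink $U$ below the codimension-two threshold); instead, as in the paper, use the natural sheaf-theoretic morphism $S^m\big(\wedge^r(\pi_*E)|_U\big)\rightarrow(\pi_*L^m)|_U$ and identify its fiber with the map $\gamma^m_y$ of Section~\ref{S3} only at the general point $y$, which is all the nonvanishing argument requires. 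Then, since $Y$ is smooth (hence normal) and $D_m(E)^N$ is an honest line bundle on all of $Y$, a nonzero section over $U$ extends across the codimension-$\geq2$ complement by Hartogs, yielding effectivity. With this repair your proposal coincides with the paper's proof.
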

\begin{proof}
By the reflexive assumption of $E$, one sees that $E$ is locally
free except along a closed subscheme $W$ of $X$ of codimension
$\geq3$. On the other hand, from \cite[Corollary 1.7]{Hart2}, it
follows that $\pi_*E$ and $\pi_*((\det E)^m)$ are also reflexive
sheaves. Thus both $\pi_*E$ and $\pi_*((\det E)^m)$ are locally free
off a closed subscheme of $Y$ of codimension three or greater. Since
$$\dim \pi(W)\leq\dim W\leq n-3=\dim Y-2,$$ we can
take a closed subscheme $W^\prime$ of $Y$ with $\codim
W^\prime\geq2$ such that $\pi_*E$ and $\pi_*((\det E)^m)$ are
locally free on $U:=Y-W^\prime$ and $E$ is locally free on
$\pi^{-1}(U)$.



We consider the natural morphism
$\gamma^m:S^m(\wedge^r(\pi_*E)|_U)\rightarrow(\pi_*L^m)|_U$. By our
assumptions, one sees that the fibre of $\gamma^m$ at $y$,
$$\gamma^m_y:S^m(\wedge^{r}H^0(E_y))\rightarrow H^0(L_y^m),$$ is
surjective. Set $V=H^0(E_y)$. Since $(X_y, E_y)$ is $m$-Hilbert
semistable, there exists a $SL(V)$-invariant non-constant degree
$N_0$ homogeneous polynomial
$$f\in S^{N_0}\left(\bigwedge^{P(m)}S^m(\wedge^rV)\right)$$ such that
\begin{equation}\label{poly}
0\neq\left(S^{N_0}\bigwedge^{P(m)}\gamma^m_y\right)(f)\in\left(\det
H^0(L_y^m)\right)^{N_0}.
\end{equation}
One sees that the one dimensional linear subspace $W$ generated by
$f$ in $S^{N_0}\left(\wedge^{P(m)}S^m(\wedge^rV)\right)$ is
invariant under the action of $GL(V)$.

Let $$\rho: GL(V)\rightarrow
GL\left(\bigwedge^{P(m)}S^m(\wedge^rV)\right)$$ be the standard
representation and $\sigma: GL(V)\rightarrow GL(W)$ the restriction
representation from $S^{N_0}\rho$. Composing the transition
functions of $(\pi_*E)|_U$ with $\rho$ (resp., $\sigma$), one can
construct a new locally free sheaf $((\pi_*E)|_U)_{\rho}$ (resp.,
$((\pi_*E)|_U)_{\sigma}$) and an injective morphism
$$((\pi_*E)|_U)_{\sigma}\hookrightarrow S^{N_0}((\pi_*E)|_U)_{\rho}.$$ Since
$((\pi_*E)|_U)_{\rho}=\wedge^{P(m)}S^m(\wedge^r(\pi_*E)|_U)$,
composing this injection with $S^{N_0}\bigwedge^{P(m)}\gamma^m$, we
gets a morphism
$\widetilde{\gamma}:((\pi_*E)|_U)_{\sigma}\rightarrow
(\det(\pi_*L^m)|_U)^{N_0}$. From property (\ref{poly}) and our
construction, it follows that $\widetilde{\gamma}$ is non-zero. It
remains to compute $((\pi_*E)|_U)_{\sigma}$ explicitly.

Take an element $A\in GL(V)$. We can write $$A=(\det
A)^{\frac{1}{\dim V}}B,$$ where $B\in SL(V)$. The action of $A$ on
$f$ is given by the following:
\begin{eqnarray*}
\sigma(A)f&=&S^{N_0}\rho((\det A)^{\frac{1}{\dim V}}B)f\\
&=& S^{N_0}\rho\left((\det A)^{\frac{1}{\dim
V}}\id_V\right)\left(S^{N_0}\rho(B)f\right)\\
&=&S^{N_0}\rho\left((\det A)^{\frac{1}{\dim V}}\id_V\right)f\\
&=&(\det A)^{\frac{N_0P(m)mr}{\dim V}}f.
\end{eqnarray*}
Since any character of the algebraic group $GL(V)$ is a power of the
determinant, one sees that $N_0P(m)mr$ is a multiple of $\dim V$. It
follows that
$$((\pi_*E)|_U)_{\sigma}=(\det(\pi_*E)|_U)^{\frac{N_0P(m)mr}{\dim V}}.$$
Hence the line bundle
\begin{eqnarray*}
&&\Big((\det(\pi_*L^m)|_U)^{N_0}\otimes(\det(\pi_*E)|_U)^{-\frac{N_0P(m)mr}{\dim
V}}\Big)^{\dim V}\\
&=&\left((\det(\pi_*L^m)|_U)^{\dim
V}\otimes(\det(\pi_*E)|_U)^{-P(m)mr}\right)^{N_0}\\
&=&D_m(E)^{N_0}|_U
\end{eqnarray*}
is effective. So is $D_m(E)^{N_0}$.
\end{proof}

From Theorem \ref{CH}, we can deduce the following slope inequality
for relative semistable sheaves on $X$. The original slope
inequality is proved by Xiao \cite{Xiao} for the relative canonical
sheaf of a surface fibration in characteristic zero and
independently by Cornalba-Harris \cite{CH} for semi-stable
fibrations. Stoppino \cite{Sto} showed that the method of
Cornalba-Harris still works for non-semistable fibrations. See also
\cite{Mo2, SSZ} for the slope inequality in positive characteristic.

\begin{theorem}\label{Slope}
Let $E$ be a reflexive sheaf of rank $r$ on $X$ such that $E_y$ is
semistable. Let $A_1,\cdots, A_{n-2}$ be ample divisors on $Y$. Then
there exists an integer $d_0$ such that if $\deg E_y\geq d_0$, we
have
$$\pi^*(A_1\cdots A_{n-2})c_1^2(E)\geq\frac{2r\deg E_y}{h^0(E_y)}A_1\cdots A_{n-2}c_1(\pi_*E).$$
\end{theorem}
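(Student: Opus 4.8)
The plan is to deduce the slope inequality from the effectivity statement of Theorem \ref{CH} by taking degrees against the ample classes $A_1\cdots A_{n-2}$ and then letting the auxiliary integer $m$ tend to infinity. First I would invoke Theorem \ref{Schmitt}: after enlarging $\deg E_y$ past the threshold $d_0=d_0(r,g)$, the pair $(X_y,E_y)$ is $m$-Hilbert semistable for all $m$ sufficiently large. This licenses the use of Theorem \ref{CH} for every large $m$, yielding that some power of
$$D_m(E)=(\det\pi_*(L^m))^{\rank\pi_*E}\otimes(\det\pi_*E)^{-P(m)mr}$$
is effective, where $L=\det E$ and $P(m)=\rank\pi_*(L^m)$. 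Since an effective divisor on the smooth variety $Y$ has nonnegative degree against the nef (indeed ample) class $A_1\cdots A_{n-2}$, I obtain
$$\rank(\pi_*E)\,\bigl(A_1\cdots A_{n-2}\,c_1(\pi_*L^m)\bigr)\;\geq\;P(m)\,m\,r\,\bigl(A_1\cdots A_{n-2}\,c_1(\pi_*E)\bigr),$$
noting that $c_1(\det\pi_*(L^m))=c_1(\pi_*L^m)$ and likewise for $\pi_*E$.

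The second step is to compute the two leading coefficients in $m$ of the quantities appearing above, using Grothendieck--Riemann--Roch for the morphism $\pi$. The rank $P(m)=\rank\pi_*(L^m)$ is, for large $m$, the relative Euler characteristic $\chi(X_y,L_y^m)=m\deg L_y+1-g$, so its top-degree term in $m$ is $(\deg E_y)\,m$ since $\deg L_y=\deg\det E_y=\deg E_y$. For $c_1(\pi_*L^m)$ I would apply GRR to $L^m$: the class $\ch(\pi_*L^m)$ equals $\pi_*\bigl(\ch(L^m)\td(T_{X/Y})\bigr)$, and collecting the coefficient of $m^2$ in its degree-one part (after capping with $A_1\cdots A_{n-2}$) produces the term $\tfrac12\,\pi^*(A_1\cdots A_{n-2})\,c_1^2(L)$. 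Here one uses $c_1(L)=c_1(\det E)=c_1(E)$, so $c_1^2(L)=c_1^2(E)$. Thus the displayed inequality, when expanded in powers of $m$, reads schematically
$$\rank(\pi_*E)\left(\tfrac12 m^2\,\pi^*(A_1\cdots A_{n-2})c_1^2(E)+O(m)\right)\;\geq\;(\deg E_y)\,m\cdot m\,r\cdot\bigl(A_1\cdots A_{n-2}\,c_1(\pi_*E)\bigr)+O(m),$$
and dividing by $m^2$ and letting $m\to\infty$ kills all lower-order contributions.

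The third step is to match $\rank(\pi_*E)$ with $h^0(E_y)$: for a general fiber, $\rank\pi_*E=h^0(X_y,E_y)$ by cohomology and base change, which holds because $\mu(E_y)\geq 3g$ forces $H^1(E_y)=0$ (semistability plus the degree bound gives $\mu^-(E_y\otimes\omega_{X_y}^{-1})>0$, hence $h^1(E_y)=\hom(E_y,\omega_{X_y})=0$), so $R^1\pi_*E$ vanishes generically and $\pi_*E$ is locally free of the expected rank on an open set. Rearranging the surviving $m^2$-terms then gives exactly
$$\pi^*(A_1\cdots A_{n-2})c_1^2(E)\;\geq\;\frac{2r\,\deg E_y}{h^0(E_y)}\;A_1\cdots A_{n-2}\,c_1(\pi_*E),$$
as claimed.

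The main obstacle I anticipate is the bookkeeping in the second step: I must verify that, after capping with the fixed $(n-2)$-dimensional class $A_1\cdots A_{n-2}$ on $Y$, the only surviving contributions as $m\to\infty$ are the stated $m^2$-coefficients, and in particular that the GRR correction terms involving $\td(T_{X/Y})$ and higher Chern classes of $E$ (which do contribute to $c_1(\pi_*L^m)$ at orders $m^1$ and $m^0$) genuinely fall away after division by $m^2$. A secondary subtlety is that $\pi_*E$ and $\pi_*L^m$ are only guaranteed locally free off a codimension-two locus $W'\subset Y$ (as in the proof of Theorem \ref{CH}), so all intersection numbers and the effectivity-implies-nonnegativity argument should be carried out on $U=Y\setminus W'$ and then extended; since $A_1\cdots A_{n-2}$ meets $U$ and the relevant classes are numerical, restricting to $U$ loses no information. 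Once these leading-order computations are pinned down, the inequality is immediate.
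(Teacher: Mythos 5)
Your proposal is correct and follows essentially the same route as the paper's proof: Theorem \ref{Schmitt} supplies $m$-Hilbert semistability past a threshold $d_0\geq 3gr$, Theorem \ref{CH} gives the effectivity of a power of $D_m(E)$ and hence the nonnegativity of its degree against $A_1\cdots A_{n-2}$, and the Grothendieck--Riemann--Roch expansion $c_1(\pi_*(L^m))=\frac{1}{2}\pi_*(c_1^2(E))m^2+Z_1m+Z_0$ together with $\rank\pi_*(L^m)=m\deg E_y-g+1$ and $\rank\pi_*E=h^0(E_y)$ yields the claim upon dividing by $m^2$ and letting $m\to\infty$. The only differences are cosmetic: you spell out the base-change identification $\rank\pi_*E=h^0(E_y)$ via $h^1(E_y)=0$ and flag the codimension-two locus, points the paper compresses into ``simple computations'' and the citation \cite[Lemma 2.3]{Mo2}.
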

\begin{proof}
By Theorem \ref{Schmitt}, one sees that there is an integer
$d_0\geq3gr$ so that $(X_y, E_y)$ is $m$-Hilbert semistable when
$\deg E_y\geq d_0$ and $m$ large enough. Therefore, from Theorem
\ref{CH}, we obtain an effective line bundle
$$\left((\det\pi_*(L^m))^{\rank
\pi_*E}\otimes(\det\pi_*E)^{-\rank\pi_*(L^m)mr}\right)^{N},$$ here
$N$ is an positive integer and $L=\det E$. This implies
\begin{equation}\label{4.2}
A_1\cdots A_{n-2}\Big(\rank
(\pi_*E)c_1(\pi_*(L^m))-\rank\pi_*(L^m)mrc_1(\pi_*E)\Big)\geq0.
\end{equation}

On the other hand, by the Grothendieck-Riemann-Roch theorem, one has
the following formula (see \cite[Lemma 2.3]{Mo2} for example):
$$c_1(\pi_*(L^m))=\frac{\pi_*(c_1^2(E))}{2}m^2+Z_1m+Z_0,$$ here
$Z_1$ and $Z_0$ are $\mathbb{Q}$-divisors of $Y$. Moreover, some
simple computations show that $$\rank\pi_*(L^m)=h^0(L_y^m)=m\deg
L_y-g+1$$ and $\rank(\pi_*E)=h^0(E_y)$. Substituting these equations
into (\ref{4.2}) and letting $m\rightarrow+\infty$, we obtain the
desired inequality.
\end{proof}

\begin{corollary}\label{Slope2}
Let $H$ be a $\pi$-relatively ample divisor on $X$, $A_1,\cdots,
A_{n-2}$ ample divisors on $Y$ and $E$ a rank $r$ torsion free sheaf
on $X$. Suppose $E_y$ is semistable. Then we have
$$\pi^*(A_1\cdots A_{n-2})\Big((\deg H_y)HK_{X/Y}-(g-1)H^2\Big)\geq0.$$ If the
equality holds, then
\begin{eqnarray}\label{4.3}
\nonumber\pi^*(A_1\cdots A_{n-2})\Delta(E)&\geq&\pi^*(A_1\cdots A_{n-2})\Big(\frac{r^2}{6}(c_1^2(X)+c_2(X))-rc_1(E)K_{X/Y}\Big)\\
&-&\frac{r\pi^*(A_1\cdots A_{n-2})}{\deg H_y}\Big((\deg
E_y)HK_{X/Y}-(2g-2)Hc_1(E)\Big)\\
\nonumber &-&r^2(g-1)A_1\cdots A_{n-2}K_Y.
\end{eqnarray}
\end{corollary}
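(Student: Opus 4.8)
The plan is to derive Corollary \ref{Slope2} from the slope inequality of Theorem \ref{Slope} by a careful application of the Grothendieck-Riemann-Roch theorem to rewrite the term $A_1\cdots A_{n-2}c_1(\pi_*E)$ in terms of intersection numbers on $X$. First I would apply Theorem \ref{Slope} to the sheaf $E$ (after passing to a twist so that $\deg E_y \geq d_0$, noting that both sides transform compatibly under twisting by a $\pi$-pullback), which gives
$$\pi^*(A_1\cdots A_{n-2})c_1^2(E)\geq\frac{2r\deg E_y}{h^0(E_y)}A_1\cdots A_{n-2}c_1(\pi_*E).$$
The key computational input is the GRR expansion of $c_1(\pi_*E)$: since $\pi$ is a curve fibration, $\mathrm{ch}(\pi_*E)=\pi_*(\mathrm{ch}(E)\cdot\mathrm{td}(T_{X/Y}))$, and extracting the codimension-one part of $Y$ yields a formula for $c_1(\pi_*E)$ involving $\pi_*(c_1^2(E))$, $\pi_*(c_1(E)K_{X/Y})$, $\pi_*(c_2(E))$, and terms with $K_{X/Y}^2$ and $c_2(X)$. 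I would substitute $h^0(E_y)=\chi(E_y)=\deg E_y - r(g-1)=r\mu(E_y)-r(g-1)$ (valid once $\deg E_y$ is large, by vanishing of $h^1$) to make the coefficient $\tfrac{2r\deg E_y}{h^0(E_y)}$ explicit.

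Next I would establish the first displayed inequality,
$$\pi^*(A_1\cdots A_{n-2})\Big((\deg H_y)HK_{X/Y}-(g-1)H^2\Big)\geq0.$$
This is exactly Theorem \ref{Slope} (or rather its underlying effectivity) applied in the rank-one case to a high power of the relatively ample $H$: taking $E=\mathcal{O}_X(tH)$ for $t\gg0$ we have $E_y$ a line bundle (trivially semistable) with $\deg E_y=t\deg H_y$ large, and the GRR computation of $c_1(\pi_*\mathcal{O}(tH))$ produces, in its leading term, precisely the combination $(\deg H_y)HK_{X/Y}-(g-1)H^2$ up to positive factors. So the effectivity statement of Theorem \ref{CH}, intersected with $\pi^*(A_1\cdots A_{n-2})$ and with the ample $H$, delivers this inequality.

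Finally, to obtain the equality case \eqref{4.3}, I would run the argument with the discriminant $\Delta(E)=c_1^2(E)-2\mathrm{ch}_0(E)\mathrm{ch}_2(E)=c_1^2(E)-(r-1)c_1^2(E)/r\cdot\ldots$—more precisely I would combine the slope inequality for $E$ with the rank-one inequality for $H$, eliminating the $c_1(\pi_*E)$ term. The strategy is that the main slope inequality bounds $\pi^*(A_1\cdots A_{n-2})c_1^2(E)$ from below by a multiple of $A_1\cdots A_{n-2}c_1(\pi_*E)$; feeding the GRR formula for $c_1(\pi_*E)$ into this and collecting the $\pi_*(c_2(E))$ and $\pi_*(c_1(E)K_{X/Y})$ contributions reassembles $\pi^*(A_1\cdots A_{n-2})\Delta(E)$ on the left, while the remaining terms — those carrying the factors $c_1^2(X)+c_2(X)$, $c_1(E)K_{X/Y}$, the $H$-relative correction, and $K_Y$ — are precisely the right-hand side of \eqref{4.3}. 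The boundary hypothesis that the first inequality is an equality pins down the otherwise uncontrolled sign of the leftover relatively-ample contribution, which is what lets the inequality go through rather than merely an estimate.

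The hard part will be the bookkeeping in the GRR computation: one must track the relative tangent sheaf $T_{X/Y}$ (whose first Chern class is $-K_{X/Y}$ and whose second-order contribution brings in $K_{X/Y}^2$ and $c_2(X)$), correctly identify which pushforward terms survive after intersecting with the $(n-2)$-fold product $\pi^*(A_1\cdots A_{n-2})$, and reconcile the normalization $h^0(E_y)$ versus $\deg E_y$ so that the coefficients in \eqref{4.3} come out with the stated rational factors $r^2/6$ and the exact $H$-dependent correction term. I expect the main obstacle to be showing that the leading-order slope inequality, which is naturally an asymptotic $m\to\infty$ statement, can be combined with the rank-one $H$-inequality in its equality case to yield a clean inequality for $\Delta(E)$ without spurious error terms.
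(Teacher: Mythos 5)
Your derivation of the first inequality is essentially sound and is in fact a legitimate shortcut: applying Theorem \ref{Slope} to the line bundles $\mathcal{O}_X(tH)$ for $t\gg0$ and expanding $c_1(\pi_*\mathcal{O}_X(tH))$ by Grothendieck--Riemann--Roch does give $\pi^*(A_1\cdots A_{n-2})\big((\deg H_y)HK_{X/Y}-(g-1)H^2\big)\geq0$ in the limit $t\to\infty$; this is consistent with the paper, where the same quantity appears as the $E$-independent coefficient of $m^2$ in the relevant expansion.

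The equality case \eqref{4.3}, however, has genuine gaps. First, your normalization step fails: twisting $E$ by a $\pi$-pullback $\pi^*M$ does not change $E_y$ at all, so it can never achieve $\deg E_y\geq d_0$; the only way to raise the fiber degree is to twist by $mH$ with $H$ relatively ample, and then the two sides of Theorem \ref{Slope} do \emph{not} transform compatibly --- one obtains a genuinely $m$-dependent inequality. Second, and more fundamentally, your plan of combining the slope inequality for the fixed sheaf $E$ with the rank-one inequality for powers of $H$ cannot produce \eqref{4.3}: the fixed-$E$ inequality carries the coefficient $\frac{2r\deg E_y}{h^0(E_y)}\neq2r$ in front of $c_1(\pi_*E)$, so the $\pi_*\ch_2(E)$ contribution never assembles exactly into $\Delta(E)$; and neither input contains the cross terms $(\deg E_y)HK_{X/Y}$ and $(2g-2)Hc_1(E)$ of \eqref{4.3}, which couple $H$ to $E$ --- no linear combination of an $H$-free inequality and an $E$-free inequality can create them. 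The paper's actual mechanism, after first reducing from torsion free to reflexive via $E\hookrightarrow E^{**}$ (a step you also omit, and which is needed since Theorem \ref{Slope} requires reflexivity), is to apply Theorem \ref{Slope} to the whole family $E(mH)$, set $S_m=h^0(E_y(mH))\pi_*c_1^2(E(mH))-2r\deg(E_y(mH))c_1(\pi_*E(mH))$, and expand $A_1\cdots A_{n-2}S_m$ as a quadratic polynomial in $m$ via GRR (the identity $\ch_j(\pi_*E(mH))=\ch_j(\pi_!E(mH))$ for $j=0,1$ itself requires Grauert's theorem and the codimension-$\geq3$ singular locus of the reflexive sheaf, which your sketch glosses over). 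Nonnegativity of $A_1\cdots A_{n-2}S_m$ for all $m\gg0$ forces the $m^2$-coefficient to be nonnegative --- the first inequality --- and, when that coefficient vanishes, forces the $m^1$-coefficient to be nonnegative, which is precisely \eqref{4.3}; the ``wrong'' normalization is harmless there because $\frac{2r\deg E_y(mH)}{h^0(E_y(mH))}\to2r$ at the relevant order, and the cross terms arise from the interaction of $mH$ with $c_1(E)$ inside $c_1^2(E(mH))$, $h^0(E_y(mH))$, and $c_1(\pi_*E(mH))$. This one-parameter coefficient-extraction argument is the missing idea in your proposal.
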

\begin{proof}
Since $E$ is torsion free, one sees that the natural map
$E\rightarrow E^{**}$ is injective and the quotient $E^{**}/E$ has
support of codimension $\geq2$. It follows that $$\pi^*(A_1\cdots
A_{n-2})\Delta(E)\geq\pi^*(A_1\cdots A_{n-2})\Delta(E^{**}).$$
Therefore, to prove the corollary, it suffices to deal with the case
that $E$ is reflexive.

Let $S_m$ be the divisor class
$$h^0(E_y(mH))\pi_*c_1^2(E(mH))-2r\deg (E_y(mH))c_1(\pi_*E(mH)).$$
Applying Theorem \ref{Slope} for $E(mH)$, we have $A_1\cdots
A_{n-2}S_m\geq0$ for $m\gg0$. It remains to understand the terms in
$S_m$ explicitly.

By the reflexiveness of $E$, one sees that $E$ is locally free
except along a closed subscheme $W$ of $X$ of codimension $\geq3$.
Let $U=Y-\pi(W)$. Then $E|_{\pi^{-1}(U)}$ is locally free, and thus
it is flat over $U$. This implies $(R^i\pi_*E(mH))|_{U}=0$ for
$i\geq1$ and $m\gg0$, by Grauert's theorem (see \cite[Corollary
12.9]{Hart}). Since $\codim \pi(W)\geq2$, one obtains
$$\ch_j(\pi_*E(mH))=\ch_j(\pi_!E(mH)),$$ for $j=0$ or $1$.
From the Grothendieck-Riemann-Roch theorem, it follows that
$$\ch(\pi_!E(mH))\td(Y)=\pi_*\Big(\ch(E(mH))\td(X)\Big).$$
This implies
\begin{eqnarray*}
&&\ch_1(\pi_*E(mH))+\frac{1}{2}\ch_0(\pi_*E(mH))c_1(Y)\\
&=&\pi_*\Big(\ch_2(E(mH))+\frac{1}{2}\ch_1(E(mH))c_1(X)+\frac{r}{12}(c_1^2(X)+c_2(X))\Big).
\end{eqnarray*}
We now compute the terms of the above equation:
\begin{eqnarray*}
\ch_1(E(mH))&=&\ch_1(E)+rmH;\\
\ch_2(E(mH))&=&\ch_2(E)+mH\ch_1(E)+\frac{1}{2}rm^2H^2;\\
\ch_0(\pi_*(E(mH)))&=&\chi(E_y(mH))\\
&=&\deg E_y+rm\deg H_y-r(g-1).
\end{eqnarray*}
It follows that
\begin{eqnarray*}
\ch_1(\pi_*E(mH))&=&\pi_*\left(\frac{r}{12}(c_1^2(X)+c_2(X))+\frac{1}{2}c_1(E)c_1(X)+\ch_2(E)\right)\\
&&+\left(\frac{r}{2}(g-1)-\frac{\deg
E_y}{2}\right)c_1(Y)+\frac{r\pi_*H^2}{2}m^2\\
&&+\left(\pi_*\Big(Hc_1(E)+\frac{r}{2}Hc_1(X)\Big)-\frac{r\deg
H_y}{2}c_1(Y)\right)m.
\end{eqnarray*}
Since
\begin{eqnarray*}
\pi_*(Hc_1(X))-\deg H_yc_1(Y)&=&\pi_*(Hc_1(X))-\pi_*\Big(H\pi^*c_1(Y)\Big)\\
&=&-\pi_*(HK_{X/Y})
\end{eqnarray*}
and
\begin{eqnarray*}
\pi_*(c_1(E)c_1(X))-\deg E_yc_1(Y)&=&\pi_*(c_1(E)c_1(X))-\pi_*\Big(c_1(E)\pi^*c_1(Y)\Big)\\
&=&-\pi_*\Big(c_1(E)K_{X/Y}\Big),
\end{eqnarray*}
one obtains
\begin{eqnarray*}
\ch_1(\pi_*E(mH))&=&\pi_*\Big[\frac{rH^2}{2}m^2+H\big(c_1(E)-\frac{r}{2}K_{X/Y}\big)m+\ch_2(E)\\
&&-\frac{1}{2}c_1(E)K_{X/Y}+\frac{r}{12}(c_1^2(X)+c_2(X))\Big]-\frac{r(g-1)}{2}K_Y.
\end{eqnarray*}
Some other simple computations show that
\begin{eqnarray*}
h^0(E_y(mH))&=&\deg E_y+rm\deg H_y-r(g-1);\\
c_1^2(E(mH))&=&c_1^2(E)+2rmHc_1(E)+r^2m^2H^2;\\
\deg(E_y(mH))&=&\deg E_y+rm\deg H_y.
\end{eqnarray*}
Substituting these equations into the expression for $S_m$, one has
\begin{eqnarray*}
S_m&=&m^2r^3\pi_*\Big((\deg H_y)HK_{X/Y}-(g-1)H^2\Big)\\
&+&mr\deg
H_y\pi_*\left(c_1^2(E)-2r\ch_2(E)+rc_1(E)K_{X/Y}-\frac{r^2}{6}(c_1^2(X)+c_2(X))\right)\\
&+&mr^3(g-1)(\deg H_y)K_Y+mr^2\pi_*\Big((\deg
E_y)HK_{X/Y}-(2g-2)Hc_1(E)\Big)\\
&+&Z,
\end{eqnarray*}
where $Z$ is a $\mathbb{Q}$-divisor on $Y$ which is independent of
$m$. From the positivity of $A_1\cdots A_{n-2}S_m$ for $m\gg0$, one
sees that
$$A_1\cdots A_{n-2}\pi_*\Big((\deg H_y)HK_{X/Y}-(g-1)H^2\Big)\geq0.$$
If the equality holds, we obtain the positivity of the coefficient
of $m$ in $A_1\cdots A_{n-2}S_m$. Hence we are done!
\end{proof}

From Corollary \ref{Slope2}, one can deduce relative Bogomolov's
inequality \cite{Mo1} for trivial fibrations in any characteristic.
We let $C$ be a smooth projective curve of genus $g$ defined over
$k$.
\begin{corollary}\label{relative}
Assume $X=C\times Y$ is a product of $C$ and $Y$ with projections
$\pi:X\rightarrow Y$ and $p:X\rightarrow C$. Let
$A_1,\cdots,A_{n-2}$ be ample divisors on $Y$ and $E$ a torsion free
sheaf on $X$. If $E_y$ is semistable, then
$$\pi^*(A_1\cdots A_{n-2})\Delta(E)\geq0.$$
\end{corollary}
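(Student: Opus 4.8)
The plan is to extract the desired inequality from the equality case of Corollary \ref{Slope2}, exploiting the fact that a product has extremely degenerate relative geometry. Since $X=C\times Y$ is a product, $K_{X/Y}=K_X-\pi^*K_Y=p^*K_C$. I would fix an ample divisor $D$ on $C$ and take $H:=p^*D$, which is $\pi$-relatively ample because $H|_{X_y}=D$ is ample on $X_y\cong C$. Corollary \ref{Slope2} then applies directly to our torsion free sheaf $E$ with $E_y$ semistable; note its statement carries no lower bound on $\deg E_y$, the bound coming from Theorem \ref{Slope} having been absorbed by twisting with $mH$.

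The key observation is that for this choice of $H$ the first inequality in Corollary \ref{Slope2} is automatically an equality. Indeed $H^2=p^*(D^2)=0$ and $HK_{X/Y}=p^*(D\cdot K_C)=0$, because $D^2$ and $D\cdot K_C$ are codimension-$2$ cycle classes on the curve $C$ and hence vanish. Therefore
$$\pi^*(A_1\cdots A_{n-2})\big((\deg H_y)HK_{X/Y}-(g-1)H^2\big)=0,$$
so the sharp relation (\ref{4.3}) becomes available, and it suffices to show that its right-hand side vanishes.

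Writing $\alpha:=\pi^*(A_1\cdots A_{n-2})$, I would compute the Chern classes of the product via $c(T_X)=p^*c(T_C)\cdot\pi^*c(T_Y)$, obtaining $c_1^2(X)+c_2(X)=3\,p^*K_C\cdot\pi^*K_Y+\pi^*(K_Y^2+c_2(Y))$. Intersecting with $\alpha$ annihilates every term pulled back from $Y$ in codimension $\geq 2$ (these become codimension-$n$ classes on the $(n-1)$-dimensional $Y$, hence vanish), leaving only $3\alpha\,p^*K_C\,\pi^*K_Y=3(2g-2)(A_1\cdots A_{n-2}K_Y)$; this makes the $\frac{r^2}{6}$-term of (\ref{4.3}) cancel exactly against the final $-r^2(g-1)A_1\cdots A_{n-2}K_Y$ term. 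For the two remaining terms I would again use $HK_{X/Y}=0$ to kill the $(\deg E_y)HK_{X/Y}$ contribution, and then invoke the fact that numerical equivalence on the curve $C$ is measured by degree, so that $p^*K_C\equiv\frac{2g-2}{\deg D}H$ on $X$. Substituting this identifies $-r\alpha\,c_1(E)K_{X/Y}$ with $-\frac{r(2g-2)}{\deg H_y}\alpha\,Hc_1(E)$, which cancels the surviving $+\frac{r(2g-2)}{\deg H_y}\alpha\,Hc_1(E)$ term. Thus the entire right-hand side of (\ref{4.3}) is $0$, giving $\alpha\,\Delta(E)\geq 0$.

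The conceptual step I would flag as the real idea is choosing $H=p^*D$ so as to force the slope quantity of Corollary \ref{Slope2} to vanish, converting an inequality into the sharp identity (\ref{4.3}). The main obstacle is then purely organizational: one must arrange the product-Chern-class computation and the numerical identity $p^*K_C\equiv\frac{2g-2}{\deg D}H$ so that all four groups of terms in (\ref{4.3}) cancel in pairs, leaving exactly zero.
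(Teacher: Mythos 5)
Your proposal is correct and is essentially the paper's own proof: the paper likewise applies Corollary \ref{Slope2} with $H$ pulled back from $C$ (it takes $H=p^*c$ for a point $c\in C$, the $\deg D=1$ case of your $H=p^*D$), notes $H^2=HK_{X/Y}=0$ so that the equality case triggers (\ref{4.3}), and kills the right-hand side via $\pi_*\big(c_1^2(X)+c_2(X)\big)=6(g-1)K_Y$ together with the vanishing of $\pi^*(A_1\cdots A_{n-2})\big((2g-2)H-K_{X/Y}\big)c_1(E)$, which is your numerical identity $p^*K_C\equiv\frac{2g-2}{\deg D}H$ in the form of the degree-zero class $p^*\big((2g-2)c-K_C\big)$. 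The only difference is the cosmetic generality of $\deg D$, which changes nothing in the cancellation.
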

\begin{proof}
In our situation, one sees that $K_X=p^*K_C+\pi^*K_Y$,
$K_{X/Y}=p^*K_C$ and $c_2(X)=\pi^*c_2(Y)+p^*K_C\cdot\pi^*K_Y$. Hence
$$\pi_*\Big(c_1^2(X)+c_2(X)\Big)=3\pi_*\Big(p^*K_C\cdot\pi^*K_Y\Big)=6(g-1)K_Y.$$
Let $c$ be a point of $C$ and let $H=p^*c$. Then one deduces that
$\deg H_y=1$, $H^2=0$ and $HK_{X/Y}=0$. Thus
$$\pi^*(A_1\cdots A_{n-2})\Big((\deg H_y)HK_{X/Y}-(g-1)H^2\Big)=0.$$ By
Corollary \ref{4.3}, one sees that the inequality (\ref{4.3}) holds.
On the other hand, we have
\begin{eqnarray*}
&&A_1\cdots A_{n-2}\pi_*\Big(\big((2g-2)H-K_{X/Y}\big)c_1(E)\Big)\\
&=&A_1\cdots
A_{n-2}\pi_*\Big(\big((2g-2)p^*c-p^*K_C\big)c_1(E)\Big)=0.
\end{eqnarray*}
Therefore, the right hand side of the inequality (\ref{4.3}) is zero
in our case. It follows that $\pi^*(A_1\cdots
A_{n-2})\Delta(E)\geq0$.
\end{proof}

\section{Proof of the main theorems}\label{S5}
The aim of this section is to prove our main theorems from relative
Bogomolov's inequality in Corollary \ref{relative}.

\begin{theorem}\label{Bog}
Let $X=C\times Y$ be the product of a smooth curve $C$ and a $n-1$
dimensional smooth projective variety $Y$ defined over $k$ with
projections $\pi:X\rightarrow Y$ and $p:X\rightarrow C$. Let
$A_1,\cdots, A_{n-2}$ be ample divisors on $Y$, $H$ an ample divisor
on $X$ and $E$ a torsion free sheaf on $X$. If $E$ is
$\mu_{\pi^*A_1,\cdots,\pi^*A_{n-2}, H}$-semistable, then
$$\pi^*(A_1\cdots A_{n-2})\Delta(E)\geq0.$$
\end{theorem}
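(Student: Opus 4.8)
The statement to prove, Theorem~\ref{Bog}, upgrades the relative Bogomolov inequality of Corollary~\ref{relative} from sheaves that are merely \emph{relatively} semistable (i.e.\ $E_y$ semistable on the fibre) to sheaves that are \emph{globally} $\mu_{\pi^*A_1,\dots,\pi^*A_{n-2},H}$-semistable, where now $H$ is an arbitrary ample divisor on $X=C\times Y$ rather than the specific fibre class $p^*c$. Both the change of the polarization (from the relative setting to a genuine ample $H$ on the total space) and the change of the semistability hypothesis (global versus fibrewise) must be addressed. The conclusion is an inequality for the same intersection number $\pi^*(A_1\cdots A_{n-2})\Delta(E)$.

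\medskip

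\noindent\textbf{The plan.} The plan is to reduce global semistability to fibrewise semistability and then invoke Corollary~\ref{relative}. First I would observe that the target quantity $\pi^*(A_1\cdots A_{n-2})\Delta(E)$ depends only on the Chern classes of $E$ and is \emph{independent of $H$}; this is the crucial feature that makes a change-of-polarization argument viable. So the strategy is: deform the polarization from the given ample $H$ towards a polarization adapted to the product structure (the fibre class $p^*c$ as in Corollary~\ref{relative}), and show that along this deformation one can control how the semistability of $E$, and hence the semistability of its restriction $E_y$ to a general fibre, behaves. Concretely I would consider the family of polarizations interpolating between $H$ and $p^*c$, use the finiteness of walls (the Harder--Narasimhan/chamber structure for variation of polarization, cf.\ the techniques of \cite[Appendix 4.C]{HL} cited in the introduction) to reduce to a nearby polarization, and then argue that $\mu$-semistability with respect to a suitable polarization forces $E_y$ to be semistable for general $y$.

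\medskip

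\noindent\textbf{Key steps in order.} (1)~Record that $\pi^*(A_1\cdots A_{n-2})\Delta(E)$ is polarization-independent, so it suffices to produce \emph{some} admissible polarization under which the relative semistability hypothesis of Corollary~\ref{relative} holds. (2)~Reduce to $E$ reflexive, since passing from $E$ to $E^{**}$ only increases $\pi^*(A_1\cdots A_{n-2})\Delta(E)$ (the quotient $E^{**}/E$ has codimension $\ge 2$ support), exactly as in the opening of the proof of Corollary~\ref{Slope2}. (3)~Show that global $\mu_{\pi^*A_1,\dots,\pi^*A_{n-2},H}$-semistability implies that the restriction $E_y$ to a general fibre $X_y\cong C$ is semistable: a destabilizing subsheaf of $E_y$ would, by a restriction/spreading-out argument over general $y$, produce a subsheaf of $E$ contradicting global semistability; one uses here that the slope $\mu$ is computed against the collection $\pi^*A_1,\dots,\pi^*A_{n-2},H$, and that for fibres the relevant intersection number $H\cdot X_y=\deg H_y$ is positive. (4)~With $E_y$ semistable established, apply Corollary~\ref{relative} (for the product $X=C\times Y$ with the same ample divisors $A_1,\dots,A_{n-2}$ on $Y$) to conclude $\pi^*(A_1\cdots A_{n-2})\Delta(E)\ge 0$.

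\medskip

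\noindent\textbf{Expected main obstacle.} The delicate step is (3): passing from a purely \emph{global} numerical stability condition to \emph{fibrewise} semistability of $E_y$ for general $y$. The subtlety is that $H$ is an arbitrary ample divisor, so restricting to a fibre does not obviously interact well with the chosen multi-polarization $(\pi^*A_1,\dots,\pi^*A_{n-2},H)$; one must verify that a subsheaf destabilizing $E_y$ on the generic fibre genuinely violates the global slope inequality. I expect this to require a Mehta--Ramanathan-type restriction argument or a careful analysis of the generic relative Harder--Narasimhan filtration of $E$, together with the observation that because $X$ is a \emph{product} the generic fibres form a trivial family, so the relative HN-filtration is well behaved and its generic graded pieces have slopes controlled by the global slope. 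The role of the product structure is therefore not merely cosmetic: it is what allows the global condition to be tested fibrewise. Once (3) is secured, the remaining steps are formal given the earlier results.
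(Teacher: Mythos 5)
Your step (3) is the crux, and as stated it is false: global $\mu_{\pi^*A_1,\dots,\pi^*A_{n-2},H}$-semistability does \emph{not} imply that $E_y$ is semistable for general $y$. Already on a product of two curves $S=C_1\times C_2$ with fibre classes $f_1,f_2$ and $H=f_1+f_2$, the sheaf $E=\mathcal{O}_S(f_1)\oplus\mathcal{O}_S(f_2)$ is $\mu_H$-semistable (both summands have $\mu_H=1$), yet its restriction to any fibre of $\pi=p_2$ is a sum of line bundles of degrees $1$ and $0$, hence unstable. No Mehta--Ramanathan-type restriction theorem can rescue this: such theorems concern general divisors in $|mH|$ for $m\gg0$, and fibres of a fibration are never of that form. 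So your plan --- establish fibrewise semistability of $E$ itself and then quote Corollary~\ref{relative} --- cannot work for $E$ directly. There is also a directional slip: the degenerate polarization adapted to Corollary~\ref{relative} is $\pi^*A_1$ (pullback of an ample divisor from $Y$), since $\pi^*(A_1\cdots A_{n-2})\cdot\pi^*A_1$ is a positive multiple of the fibre class of $\pi$ and the associated slope computes $\deg E_y$; deforming $H$ towards $p^*c$, as you propose, measures degrees along horizontal curves $\{c\}\times\gamma$ and says nothing about the restrictions $E_y$ to the fibres $X_y\cong C$ of $\pi$.

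What the paper actually does, and what your proposal lacks, is an induction on the rank of $E$ combined with a wall-crossing dichotomy that absorbs precisely the failure above. If $E$ is semistable with respect to the degenerate collection $(\pi^*A_1,\dots,\pi^*A_{n-2},\pi^*A_1)$, then $E_\eta$ is semistable, hence $E_y$ is semistable by openness of semistability, and Corollary~\ref{relative} applies. Otherwise \cite[Lemma 4.C.5]{HL} produces a rational $t\geq0$ and a saturated $E_0\subset E$ of rank $r_0$ such that $E$ and $E_0$ are $\mu_{\pi^*A_1,\dots,\pi^*A_{n-2},H_t}$-semistable of the same slope, where $H_t=H+t\pi^*A_1$ is again ample; setting $\xi=rc_1(E_0)-r_0c_1(E)$ one gets $\pi^*(A_1\cdots A_{n-2})H_t\xi=0$, hence $\pi^*(A_1\cdots A_{n-2})\xi^2\leq0$ by the Hodge index theorem, and the identity
$$\pi^*(A_1\cdots A_{n-2})\Big(\Delta(E)-\tfrac{r}{r_0}\Delta(E_0)-\tfrac{r}{r_1}\Delta(E_1)\Big)=-\frac{\pi^*(A_1\cdots A_{n-2})\xi^2}{r_0r_1}\geq0$$
reduces the claim to the lower-rank $\mu_{\dots,H_t}$-semistable sheaves $E_0$ and $E_1=E/E_0$, where the induction hypothesis finishes. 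You do invoke the chamber structure of \cite[Appendix 4.C]{HL}, but only to ``reduce to a nearby polarization'' while keeping $E$ semistable; you have no mechanism for the case where $E$ destabilizes before the degenerate wall is reached, and the counterexample shows that case is unavoidable. The induction on rank together with the discriminant/Hodge-index identity is not an optional refinement but the essential engine of the proof (your steps (1), (2) and (4) are fine, though the reflexive reduction is not needed here, as the paper already performs it inside Corollary~\ref{Slope2}).
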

\begin{proof}
The proof is by induction on the rank of $E$. For rank 1 the
assertion is obvious. Assume that the theorem holds for every
semistable sheaf of rank less than $r$ and $E$ is of rank $r$.

Let $K(Y)$ be the function field of $Y$ and $y$ be a general point
of $Y$. Denote the generic fibre $X\times_Y\Spec(K(Y))$ by
$X_{\eta}$. Let $E_{\eta}$ be the restriction of $E$ to $X_{\eta}$
and $E_y$ the restriction of $E$ to the general fibre $X_y$ of
$\pi$. If $E$ is $\mu_{\pi^*A_1,\cdots,\pi^*A_{n-2},
\pi^*A_1}$-semistable, then $E_{\eta}$ is semistable. By the
openness of semistability (see \cite[Proposition 2.3.1]{HL}), one
sees that $E_y$ is semistable. Hence Corollary \ref{relative}
implies $$\pi^*(A_1\cdots A_{n-2})\Delta(E)\geq0.$$

Now we assume that $E$ is not $\mu_{\pi^*A_1,\cdots,\pi^*A_{n-2},
\pi^*A_1}$-semistable. By \cite[Lemma 4.C.5]{HL} $\footnote{This
lemma is stated for the surface case, but its proof still works for
our situation.}$, there is a non-negative rational number $t$ and a
saturated subsheaf $E_0\subset E$ with $\rank E_0=r_0$ such that
$$\mu_{\pi^*A_1,\cdots,\pi^*A_{n-2},
\pi^*A_1}(E_0)>\mu_{\pi^*A_1,\cdots,\pi^*A_{n-2}, \pi^*A_1}(E),$$
and $E$ and $E_0$ are
$\mu_{\pi^*A_1,\cdots,\pi^*A_{n-2},H_t}$-semistable of the same
slope, where
$$H_t=H+t\pi^*A_1.$$
Since $E_0$ is saturated, one sees that $E_1=E/E_0$ is torsion free
and $\mu_{\pi^*A_1,\cdots,\pi^*A_{n-2},H_t}$-semistable of rank
$r_1=r-r_0$. Set $\xi=rc_1(E_0)-r_0c_1(E)$. Then
$$\pi^*(A_1\cdots A_{n-2})H_t\xi=0.$$ It follows from the Hodge index theorem
that $$\pi^*(A_1\cdots A_{n-2})\xi^2\leq0.$$ Moreover, the following
identity holds:
$$\pi^*(A_1\cdots A_{n-2})\Big(\Delta(E)-\frac{r}{r_0}\Delta(E_0)-\frac{r}{r_1}\Delta(E_1)\Big)=-\frac{\pi^*(A_1\cdots A_{n-2})\xi^2}{r_0r_1}\geq0.$$
By our induction assumption, one has $\pi^*(A_1\cdots
A_{n-2})\Delta(E_i)\geq0$ for $i=0, 1$ and therefore
$$\pi^*(A_1\cdots A_{n-2})\Delta(E)\geq0.$$
\end{proof}

By this, we immediately deduce Theorem \ref{Surface}.
\begin{corollary}[=Theorem \ref{Surface}]
Let $H$ be a numerically nontrivial nef divisor on a smooth
projective surface $S$ which is birational to a product type
surface. Then for any $\mu_H$-semistable sheaf $E$ on $S$, we have
$\Delta(E)\geq0$.
\end{corollary}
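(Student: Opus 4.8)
The goal is to deduce Theorem \ref{Surface} from Theorem \ref{Bog}, so the plan is to reduce the general statement---an arbitrary numerically nontrivial nef divisor $H$ on a surface $S$ birational to a product type surface---to the special situation handled by Theorem \ref{Bog}, namely a genuine product $C\times Y$ with a specific type of polarization. First I would dispose of the birational issue: if $\sigma:S\to S'$ is a birational morphism to a product type surface and $E$ is $\mu_H$-semistable on $S$, then pulling back or pushing forward and using that $\Delta$ is a birational invariant for the relevant numerical computation (blow-ups change $c_2$ and $c_1^2$ in a controlled, canceling way in the discriminant when intersected against nef classes) should let me assume $S$ is itself product type, i.e.\ there is a finite separable surjective $f:C_1\times C_2\to S$. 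Since $\Delta$ behaves well under finite separable pullback (it multiplies by $\deg f$ up to the rank factors, and semistability is preserved under pullback by $f$ by restriction theorems in Section \ref{S2}), I would further reduce to proving $\Delta(E)\ge0$ for $\mu_{f^*H}$-semistable sheaves on the honest product $C_1\times C_2$.

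With $X=C\times Y$ now an actual product of curves (here $Y=C_2$ is a curve, so $n=2$ and there are no auxiliary ample divisors $A_i$, the class $\pi^*(A_1\cdots A_{n-2})$ being empty/trivial), the remaining task is to pass from a \emph{general} nef class $H$ to the form covered by Theorem \ref{Bog}. The key point is that Theorem \ref{Bog} is stated for $\mu_{\pi^*A_1,\dots,\pi^*A_{n-2},H}$-semistability with $H$ ample on the product; in the surface case this is just $\mu_H$-semistability for an ample $H$. So the plan is: (i) handle ample $H$ directly by Theorem \ref{Bog}, noting $\Delta(E)=\pi^*(\text{empty product})\,\Delta(E)\ge0$; (ii) handle a general nef $H$ by a limiting/continuity argument. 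For (ii) I would write $H$ as a limit of ample classes $H_\epsilon=H+\epsilon A$ (with $A$ ample) and use the openness properties together with the fact that $\Delta(E)$ is an intrinsic numerical invariant independent of the polarization: the discriminant does not depend on $H$ at all, so the only subtlety is that $\mu_H$-semistability for the boundary nef class $H$ need not coincide with $\mu_{H_\epsilon}$-semistability for nearby ample classes.

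The main obstacle is therefore purely on the stability side: a sheaf that is $\mu_H$-semistable for a numerically nontrivial nef (non-ample) $H$ may fail to be semistable for every nearby ample polarization, so I cannot naively invoke Theorem \ref{Bog}. To get around this I would argue exactly as in the proof of Theorem \ref{Bog} itself: if $E$ is $\mu_H$-semistable but becomes unstable for a nearby ample class, then by the wall-crossing technique of \cite[Appendix 4.C]{HL} (the same Lemma 4.C.5 used above) there is a maximal destabilizing saturated subsheaf $E_0\subset E$ with quotient $E_1$ such that $E_0$ and $E$ have equal $\mu_{H'}$-slope for a suitable ample wall-class $H'=H+t A$, while $\xi:=rc_1(E_0)-r_0c_1(E)$ is $H'$-orthogonal. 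The Hodge index theorem then gives $\xi^2\le0$, and the discriminant identity $\Delta(E)-\tfrac{r}{r_0}\Delta(E_0)-\tfrac{r}{r_1}\Delta(E_1)=-\xi^2/(r_0r_1)\ge0$ reduces the claim to the lower-rank sheaves $E_0,E_1$, which are $\mu_{H'}$-semistable for an ample class and hence covered by induction/Theorem \ref{Bog}. Thus I expect the proof to be a rank induction that mirrors Theorem \ref{Bog} verbatim, with the one genuinely new ingredient being the birational and finite-cover reductions at the start; the hard part is checking that $\Delta$ and semistability descend correctly through $f$ and $\sigma$, which I would handle using the restriction theorems and numerical invariance assembled in Section \ref{S2}.
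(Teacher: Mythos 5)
Your overall route coincides with the paper's: reduce to an ample polarization on an honest product of two curves and then invoke Theorem \ref{Bog}, whose $n=2$ case is exactly Bogomolov's inequality for an ample divisor on $C_1\times C_2$, with the finite cover handled by the facts that $f^*E$ is $\mu_{f^*H}$-semistable and $\Delta(f^*E)=\deg(f)\,\Delta(E)$. The paper, however, disposes of both preliminary reductions---from a numerically nontrivial nef $H$ to an ample one, and from a surface birational to a product type surface to the product type surface itself---in one stroke by citing \cite[Propositions 6.2 and 6.5]{Langer3}. Your wall-crossing argument for the nef-to-ample step is in substance a correct re-derivation of the first of these (it is the same mechanism used inside the proofs of Theorem \ref{Bog} and Theorem \ref{main}), up to one correction: since $E$ is semistable at the boundary class $H$ and unstable for $H+\epsilon A$ with $\epsilon>0$ small, the wall sits at $t=0$, i.e.\ the wall class is the nef divisor $H$ itself, not an ample $H'$. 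One then has $\xi H=0$ with $H$ nef and numerically nontrivial, which still yields $\xi^2\le 0$ by the Hodge index theorem, and the rank induction goes through because $E_0$ and $E_1=E/E_0$, having the same $\mu_H$-slope as the semistable $E$, are again $\mu_H$-semistable of smaller rank; one also needs boundedness to pick a single destabilizer $E_0$ valid for a sequence $\epsilon_i\to 0$. A small misattribution: the semistability of $f^*E$ is \cite[Lemma 3.2.2]{HL} for finite separable morphisms, not a restriction theorem from Section \ref{S2}, which contains none.

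The genuine gap is the birational reduction. The claim that $\Delta$ is a birational invariant because blow-ups change $c_1^2$ and $c_2$ in a canceling way is not a proof and is false as a general principle for sheaves: there is no canonical transport of a $\mu_H$-semistable sheaf $E$ on $S$ across a birational map that controls both semistability and the discriminant. Pushing forward under a blow-down $\sigma$ and saturating, $F=(\sigma_*E)^{**}$, changes $\Delta$ by correction terms along the exceptional locus ($c_1(E)$ acquires exceptional components and $\ch_2$ changes by lengths of the modifications), and the semistability of $F$ is naturally checked against the pulled-back polarization $\sigma^*H'$, which is only nef---so this step actually presupposes, rather than follows from, your nef-to-ample reduction. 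Establishing the required comparison is precisely the content of \cite[Proposition 6.5]{Langer3}, which the paper cites; without that proposition or an equivalent argument, your proof is incomplete at this step, although the rest of your plan is sound and matches the paper's.
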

\begin{proof}
By \cite[Proposition 6.2 and 6.5]{Langer3}, one can assume that $H$
is ample and $S$ is a product type surface. Let $f:C_1\times
C_2\rightarrow S$ be the finite separable morphism associated with
$S$, where $C_1$ and $C_2$ are curves. It turns out that $f^*E$ is
$\mu_{f^*H}$-semistable. It follows from Theorem \ref{Bog} that
$\Delta(f^*E)\geq0$. Thus $\Delta(E)\geq0$.
\end{proof}

Theorem \ref{Surface} can be generalized to high dimensional product
type varieties. Let $X_n=C_1\times\cdots\times C_n$ be a product of
smooth projective curves defined over $k$ with projections
$p_i:X_n\rightarrow C_{i}$ for $i=1,\cdots,n$. We now prove Theorem
\ref{main}.

\begin{theorem}[=Theorem
\ref{main}] Let $X$ be a product type variety with respect to a
finite separable morphism $f:X_n\rightarrow X$. Let $H$ be a product
type ample divisor on $X$. Then for any $\mu_{H}$-semistable sheaf
$E$ on $X$, we have
$$H^{n-2}\Delta(E)\geq0.$$
\end{theorem}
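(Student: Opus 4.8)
The plan is to reduce Theorem \ref{main} to the two results already established, namely the relative inequality of Theorem \ref{Bog} (equivalently Corollary \ref{relative}) and the surface case Theorem \ref{Surface}.

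First I would pull everything back along the finite separable morphism $f:X_n\rightarrow X$, just as in the deduction of Theorem \ref{Surface}. Since $f$ is finite and separable, $f^*E$ is $\mu_{f^*H}$-semistable, and by hypothesis $f^*H=h_1+\cdots+h_n$ with $h_i=p_i^*A_i$ for ample divisors $A_i$ on $C_i$. Compatibility of the Chern character with pullback gives $(f^*H)^{n-2}\Delta(f^*E)=f^*\big(H^{n-2}\Delta(E)\big)$, and the projection formula turns this into $(\deg f)\,H^{n-2}\Delta(E)$; as $\deg f>0$, it suffices to prove $(f^*H)^{n-2}\Delta(f^*E)\geq0$ on $X_n$. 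Thus I may assume $X=X_n$ and $H=\sum_i h_i$.

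The conceptual heart is that, because each $A_i$ sits on a curve, $h_i^2=0$, so expanding the power collapses to a single multinomial layer:
\begin{equation*}
H^{n-2}=(n-2)!\sum_{\{j,k\}}\Big(\prod_{i\neq j,k}\deg A_i\Big)\,[C_j\times C_k],
\end{equation*}
where $[C_j\times C_k]$ is the class of the coordinate surface cut out by fixing general points in the remaining factors. Intersecting with $\Delta(E)$ and using that Chern classes restrict, this reads $H^{n-2}\Delta(E)=\sum_{\{j,k\}}c_{jk}\,\Delta\big(E|_{C_j\times C_k}\big)$ with all $c_{jk}>0$. Each $C_j\times C_k$ is a product type surface, so Theorem \ref{Surface} would give every summand $\geq0$ once the relevant restrictions are $\mu$-semistable.

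To make this rigorous without restriction theorems, I would instead produce each coordinate contribution $[C_j\times C_k]\Delta(E)\geq0$ directly from Corollary \ref{relative} (and Theorem \ref{Bog}) applied with $C_j$ as the distinguished curve and $Y_j=\prod_{i\neq j}C_i$: for ample divisors $B_1,\dots,B_{n-2}$ on $Y_j$ these yield $\pi_j^*(B_1\cdots B_{n-2})\Delta(E)\geq0$, and letting the $B_\bullet$ degenerate to the nef classes pulled back from the factors recovers, in the limit, the coordinate class $[C_j\times C_k]$. This repackages the required input as semistability on the generic \emph{curve} fibre of $\pi_j$, the setting in which openness of semistability is available. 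The main obstacle is precisely that these results need semistability with respect to polarizations — and on generic fibres — that do not follow formally from $\mu_H$-semistability. I expect to bridge this, exactly as in the proof of Theorem \ref{Bog}, by the variation-of-polarization technique of \cite[Appendix 4.C]{HL}: when $E$ fails to be semistable for the degenerate polarization feeding these results, \cite[Lemma 4.C.5]{HL} produces a saturated $E_0\subset E$ for which $E$, $E_0$ and $E_1=E/E_0$ become semistable of equal slope, whereupon the Hodge index theorem applied to the ample class $H$ forces $H^{n-2}\xi^2\leq0$ for $\xi=rc_1(E_0)-r_0c_1(E)$, and the identity relating $\Delta(E)$, $\Delta(E_0)$, $\Delta(E_1)$ closes an induction on the rank. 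Reconciling the symmetric polarization $\mu_H$ with the degenerate polarizations demanded by Corollary \ref{relative} is, I believe, the genuine difficulty.
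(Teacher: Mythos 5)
Your reduction to $X=X_n$ via the separable pullback, and the expansion $H^{n-2}=(n-2)!\sum_{\{j,k\}}\bigl(\prod_{i\neq j,k}\deg A_i\bigr)[C_j\times C_k]$ using $h_i^2=0$, are both correct; but the proof has a genuine gap exactly where you flag it, and the bridge you propose does not close it. Applying Theorem \ref{Bog} with base $Y_j=\prod_{i\neq j}C_i$ requires $E$ to be $\mu_{\pi_j^*B_1,\dots,\pi_j^*B_{n-2},H}$-semistable, so you must pass from the symmetric polarization $(H,\dots,H)$ to one in which $n-2$ slots have been replaced at once by pullback classes from $Y_j$. But \cite[Lemma 4.C.5]{HL} is a one-parameter wall-crossing statement along a single ray $H_t=H+tD$; it cannot handle this simultaneous multi-slot degeneration. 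To iterate it slot by slot you would need to strengthen the inductive statement to cover all intermediate mixed polarizations (products of nef pullback classes with an ample class), since the wall-crossing subquotients $E_0,E_1$ are semistable only for those mixed polarizations and not for $\mu_H$, and you would need the Hodge-index step for each such mixed collection. None of this is carried out — your closing sentence concedes that the reconciliation is left open — and it is precisely the mathematical content of the theorem, not a routine verification. Your first idea (restrict to a general $C_j\times C_k$ and quote Theorem \ref{Surface}) fails for the reason you yourself name: no restriction theorem delivers semistability of $E|_{C_j\times C_k}$, especially in positive characteristic, so the entire weight of the argument rests on the unproven bridge.

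The paper supplies the missing idea with a double induction on rank and dimension in which only \emph{one} polarization slot is ever perturbed. Let $F_j$ be a general fiber of $p_j:X_n\to C_j$, a divisor isomorphic to the product of the remaining $n-1$ curves. Comparing $\mu_{H,\dots,H}$ with $\mu_{F_j,H,\dots,H}$ along the single ray $H_t=H+tF_j$, \cite[Lemma 4.C.5]{HL} applies verbatim: either a wall produces saturated $E_0$ and $E_1=E/E_0$ of smaller rank, and the Hodge index theorem together with the identity $H^{n-2}\bigl(\Delta(E)-\frac{r}{r_0}\Delta(E_0)-\frac{r}{r_1}\Delta(E_1)\bigr)=-\frac{H^{n-2}\xi^2}{r_0r_1}$ closes the rank induction; or $E$ is $\mu_{F_j,H,\dots,H}$-semistable for every $j$, in which case openness of semistability gives that $E|_{F_j}$ is semistable with respect to $H|_{F_j}$, and the induction on dimension (base case Theorem \ref{Surface}) yields $F_jH^{n-3}\Delta(E)\geq0$. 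The conclusion then follows from the one-step peeling $H^{n-2}\Delta(E)=\sum_j b_jF_jH^{n-3}\Delta(E)$ rather than your full collapse to coordinate surfaces. Had you pushed your multi-slot degeneration through, you would in effect have rediscovered this double induction with ``dimension'' replaced by ``number of degenerate slots,'' at the cost of a substantially heavier inductive bookkeeping; as written, the proposal is a plausible plan whose central step is missing.
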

\begin{proof}
Since the semistability is invariant under the pull back of $f$, we
can assume that $X=X_n$ and $f$ is the identity.

For $n\geq2$, let $P(r,n)$ denote the statement: for any product
type ample divisor $H$ on $X_n$, one has
$$H^{n-2}\Delta(E)\geq0$$ for any
$\mu_{H}$-semistable sheaf $E$ of rank $\leq r$ on $X_n$.

Obviously, by Theorem \ref{Surface}, $P(r,2)$ and $P(1,n)$ hold for
any positive integers $n\geq2$ and $r$. By induction one sees that
if $P(r-1,n)$ and $P(r,n-1)$ imply $P(r, n)$, then $P(r, n)$ holds
for any positive integers $n\geq2$ and $r$.

Now we assume $P(r-1,n)$ and $P(r,n-1)$ hold. We let $H$ be a
product type ample divisor on $X_n$ and $F_j$ the general fiber of
$p_j:X_n\rightarrow C_j$. Let $E$ be a $\mu_{H}$-semistable sheaf of
rank $r$ on $X_n$. If $E$ is not $\mu_{F_j,H,\cdots, H}$-semistable
for some $1\leq j\leq n$, then similar to the proof of Theorem
\ref{Bog}, there is a non-negative rational number $t$ and a
saturated subsheaf $E_0\subset E$ with $\rank E_0=r_0$ such that
$$\mu_{F_j,H,\cdots, H}(E_0)>\mu_{F_j,H,\cdots,
H}(E),$$ and $E$ and $E_0$ are $\mu_{H_t,H,\cdots, H}$-semistable of
the same slope, where $$H_t=H+tF_j.$$ Since $E_0$ is saturated, one
sees that $E_1=E/E_0$ is torsion free and
$\mu_{H_t,H\cdots,H}$-semistable of rank $r_1=r-r_0$. Set
$\xi=rc_1(E_0)-r_0c_1(E)$. Then
$$H^{n-2}H_t\xi=0.$$ It follows from the Hodge index theorem
that $$H^{n-2}\xi^2\leq0.$$ Moreover, the following identity holds:
$$H^{n-2}\Big(\Delta(E)-\frac{r}{r_0}\Delta(E_0)-\frac{r}{r_1}\Delta(E_1)\Big)=-\frac{H^{n-2}\xi^2}{r_0r_1}\geq0.$$
From our induction assumptions, it follows that
$H^{n-2}\Delta(E_i)\geq0$ for $i=0, 1$ and therefore
$$H^{n-2}\Delta(E)\geq0.$$

Now we assume that $E$ is $\mu_{F_j,H,\cdots, H}$-semistable for any
$1\leq j\leq n$, then by the openness of semistability, one sees
that $E|_{F_j}$ is $\mu_{H'}$-semistable, here $H'=H|_{F_j}$. Since
$$F_j\cong C_1\times\cdots \times C_{j-1}\times
C_{j+1}\times\cdots\times C_n,$$ by the induction assumptions, we
have $(H')^{n-3}\Delta(E|_{F_j})\geq0$, i.e.,
$$F_jH^{n-3}\Delta(E)\geq0,$$ for $1\leq j\leq n$. Since
$H$ is of product type, we can write
$$H=p_1^*B_1+\cdots+p_n^*B_n,$$ where $B_i$ is a divisor on $C_i$
with $\deg B_i=b_i>0$. Then one concludes
$$H^{n-2}\Delta(E)=\sum_{j=1}^n(b_jF_j)H^{n-3}\Delta(E)\geq0.$$ Thus we are done!
\end{proof}

\section{Applications of Bogomolov's inequality}\label{S6}
In this section we exhibit the applications of Theorem \ref{Surface}
to the positivity of linear systems and torsion free sheaves and the
construction of Bridgeland stability conditions. We always let $S$
be a smooth projective surface defined over $k$ which is birational
to a product type surface in this section.

\subsection{Adjoint linear systems}
\begin{theorem}\label{Reider}
Let $d\geq1$ be an integer and $L$ be a nef divisor on $S$ with
$L^2>4d$. If $|K_S+L|$ is not $(d-1)$-very ample, then there exists
a curve $D$ on $S$ such that $$LD-d\leq D^2<\frac{1}{2}LD<d.$$
\end{theorem}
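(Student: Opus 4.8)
The plan is to run the classical Reider--Beltrametti--Sommese argument, with the positive-characteristic Bogomolov inequality of Theorem \ref{Surface} inserted at the one point where the characteristic-zero instability theorem is normally used. First I would translate the failure of $(d-1)$-very ampleness into a cohomological statement. If $|K_S+L|$ is not $(d-1)$-very ample, then there is a $0$-dimensional subscheme $Z\subset S$ for which the restriction $H^0(K_S+L)\to H^0(\mathcal{O}_Z(K_S+L))$ is not surjective, equivalently $H^1(\mathcal{I}_Z(K_S+L))\neq 0$. Choosing $Z$ of minimal length $\ell\le d$ with this property guarantees the Cayley--Bacharach condition, so that the Serre construction yields a genuine vector bundle. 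Concretely, Serre duality (valid in any characteristic on a smooth projective surface) gives $H^1(\mathcal{I}_Z(K_S+L))^{\vee}\cong \Ext^1(\mathcal{I}_Z(L),\mathcal{O}_S)$, and a nonzero class here produces a non-split extension $0\to\mathcal{O}_S\to E\to\mathcal{I}_Z(L)\to 0$ with $E$ a rank-two bundle satisfying $c_1(E)=L$ and $c_2(E)=\ell\le d$.

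Next I would feed $E$ into Theorem \ref{Surface}. Since $\Delta(E)=4c_2(E)-c_1^2(E)=4\ell-L^2\le 4d-L^2<0$ and $L$ is a numerically nontrivial nef divisor (as $L^2>4d>0$), Theorem \ref{Surface} forces $E$ to fail to be $\mu_L$-semistable. The first step of the Harder--Narasimhan filtration then provides a saturated rank-one subsheaf, which is a line bundle $\mathcal{O}_S(A)\hookrightarrow E$ (being reflexive of rank one on a smooth surface), with $\mu_L(\mathcal{O}_S(A))>\mu_L(E)$, i.e. $L\cdot A>\tfrac12 L^2$, and with torsion-free quotient $\mathcal{I}_W(L-A)$ for some $0$-dimensional $W$. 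Put $D:=L-A$. The composite $\mathcal{O}_S(A)\to E\to \mathcal{I}_Z(L)$ must be nonzero, for otherwise $\mathcal{O}_S(A)$ would factor through the sub-bundle $\mathcal{O}_S$, forcing $-A$ effective and hence $L\cdot A\le 0$, contradicting the slope inequality; this nonzero composite gives a nonzero section of $\mathcal{O}_S(D)$, so $D$ is effective.

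It then remains to extract the numerical inequalities, which I expect to be a clean Hodge-index computation. From $c_2(E)=A\cdot D+\mathrm{length}(W)$ and $\mathrm{length}(W)\ge 0$ one gets $\ell\ge A\cdot D=L\cdot D-D^2$, hence $L\cdot D-d\le L\cdot D-\ell\le D^2$, which is the left-hand inequality. The destabilizing condition reads $L\cdot D=L^2-L\cdot A<\tfrac12 L^2$. Applying the Hodge index theorem to $L$ (which has $L^2>0$) gives $D^2\le (L\cdot D)^2/L^2$; combining this with $L\cdot D<\tfrac12 L^2$ yields $D^2<\tfrac12 L\cdot D$ (the boundary case $L\cdot D=0$ gives $D^2<0$ directly, since $D$ is effective and nonzero). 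Finally, setting $x=L\cdot D$, the inequalities $x-\ell\le D^2\le x^2/L^2$ force $x^2-L^2x+L^2\ell\ge 0$, and since $x<\tfrac12 L^2$ lies below the larger root, we get $x\le \tfrac{2L^2\ell}{L^2+L\sqrt{L^2-4\ell}}<2\ell\le 2d$, so $\tfrac12 L\cdot D<d$. This assembles into the chain $L\cdot D-d\le D^2<\tfrac12 L\cdot D<d$, and $D$ is the desired curve.

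The main obstacle I anticipate is not the final numerology, which falls out of the Hodge index theorem once the setup is in place, but rather the two structural points that legitimize the construction: verifying that a minimal $Z$ satisfies Cayley--Bacharach so that $E$ is honestly locally free, and handling the destabilization carefully when $L$ is only nef rather than ample (ensuring the Harder--Narasimhan sub is a line bundle embedding into $E$ with quotient of the stated shape, and that the composite to $\mathcal{I}_Z(L)$ is nonzero so that $D$ is effective). Once these are secured, the essential content is exactly the Bogomolov inequality of Theorem \ref{Surface}.
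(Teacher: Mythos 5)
Your proposal is correct and matches the paper's approach exactly: the paper's proof of Theorem \ref{Reider} consists of invoking Theorem \ref{Surface} and noting that the arguments of \cite{Reider} and \cite{BS} then go through unchanged, which is precisely the Serre construction (via a minimal length-$\ell\leq d$ subscheme satisfying Cayley--Bacharach), destabilization via Bogomolov, and Hodge-index computation that you spelled out. One minor inaccuracy: non-surjectivity of $H^0(K_S+L)\to H^0(\mathcal{O}_Z(K_S+L))$ implies, but is not in general equivalent to, $H^1(\mathcal{I}_Z(K_S+L))\neq 0$ (in positive characteristic $H^1(\mathcal{O}_S(K_S+L))$ need not vanish for $L$ nef and big); since you only use the forward implication to produce the extension class, nothing in your argument breaks.
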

\begin{proof}
By Theorem \ref{Surface}, the proof is the same as that of
\cite{Reider} and \cite{BS}.
\end{proof}

\subsection{Positivity of semistable sheaves}
Let $H$ be an ample divisor and $E$ a $\mu_H$-semistable torsion
free sheaf on $S$ with $\rank E\geq2$. We define the generalized
discriminant of $E$ to be
$$\overline{\Delta}_H(E):=(H\ch_1(E))^2-2H^2\ch_0(E)\ch_2(E).$$
By Theorem \ref{Surface} and Hodge index theorem, one sees that
$\overline{\Delta}_H(E)\geq0$.

\begin{theorem}\label{Sun}
If $l>(\overline{\Delta}_H(E)-\mu_H(E))/H^2$, then we have
$H^1\big(E(K_S+lH)\big)=0$, and $E(K_S+lH)$ is generated by global
sections if $l>2\rank E+(\overline{\Delta}_H(E)-\mu_H(E))/H^2$.
\end{theorem}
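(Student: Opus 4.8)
The plan is to reduce both assertions to a single vanishing statement, which I will call (V): \emph{if $F$ is a $\mu_H$-semistable torsion free sheaf on $S$ with $lH^2>\overline{\Delta}_H(F)-\mu_H(F)$, then $H^1\big(F(K_S+lH)\big)=0$}. The key point is that (V) will use Bogomolov's inequality (Theorem \ref{Surface}) only through $\overline{\Delta}_H\ge0$, so the whole argument transfers to positive characteristic now that Theorem \ref{Surface} is available. The first assertion of the theorem is exactly (V) applied to $F=E$, so the remaining content is (V) itself together with a pointwise reduction of the global generation statement.

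For global generation I fix a closed point $x$ and a one dimensional quotient $q\colon E\otimes k(x)\to k$, and I form the elementary modification $E''=\ker\big(E\twoheadrightarrow k(x)\big)$ determined by $q$. Then $E''$ is torsion free, and since every subsheaf of $E''$ is a subsheaf of $E$, we get $\mu_H(\cdot)\le\mu_H(E)=\mu_H(E'')$, so $E''$ is $\mu_H$-semistable with the same slope as $E$; from $\ch_1(E'')=\ch_1(E)$ and $\ch_2(E'')=\ch_2(E)-1$ one computes $\overline{\Delta}_H(E'')=\overline{\Delta}_H(E)+2\rank(E)\,H^2$. The kernel of $E(K_S+lH)\to k(x)$ is $E''(K_S+lH)$, so the evaluation $H^0\big(E(K_S+lH)\big)\to k(x)$ fails to be surjective only if $H^1\big(E''(K_S+lH)\big)\neq0$. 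Applying (V) to $E''$, this cohomology vanishes as soon as $lH^2>\overline{\Delta}_H(E'')-\mu_H(E'')=2\rank(E)H^2+\overline{\Delta}_H(E)-\mu_H(E)$, which is precisely the stated bound; as $x$ and $q$ were arbitrary, $E(K_S+lH)$ is then globally generated.

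To prove (V) I first dispose of $H^2$. By Serre duality $h^2\big(F(K_S+lH)\big)=h^0\big(F^{*}(-lH)\big)$, and $F^{*}(-lH)$ is $\mu_H$-semistable of slope $-\mu_H(F)-lH^2<0$, the inequality following from $lH^2>\overline{\Delta}_H(F)-\mu_H(F)\ge-\mu_H(F)$ since $\overline{\Delta}_H(F)\ge0$; a semistable sheaf of negative slope has no sections, so $h^2=0$. For $H^1$ I dualize again: a nonzero class in $H^1\big(F(K_S+lH)\big)\cong H^1\big(F^{*}(-lH)\big)^{*}$ forces $\Ext^1\big(\mathcal{O}_S(lH),F^{*}\big)\neq0$, producing a rank $\rank(F)+1$ extension
$$0\to F^{*}\to \mathcal{W}\to \mathcal{O}_S(lH)\to0 .$$
A direct Chern class computation gives $\Delta(\mathcal{W})=-\rank(F)\,l^2H^2+O(l)$, so once $l$ exceeds the threshold one has $\Delta(\mathcal{W})<0$, and Theorem \ref{Surface} forces $\mathcal{W}$ to be $\mu_H$-unstable. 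Its maximal destabilizing subsheaf $\mathcal{F}\subset\mathcal{W}$ cannot lie in $F^{*}$: that would give $\mu_H(\mathcal{F})\le\mu_H(F^{*})=-\mu_H(F)$, contradicting $\mu_H(\mathcal{F})>\mu_H(\mathcal{W})$ once $lH^2>-\mu_H(F)$. Hence $\mathcal{F}$ maps onto a rank one subsheaf $\mathcal{I}_Z(lH-C)$ of $\mathcal{O}_S(lH)$ with $C\ge0$ effective, and playing $\mathcal{F}$ off against the semistable $F^{*}$ and against $\mathcal{O}_S(lH)$—using the Hodge index theorem and $\overline{\Delta}_H(F)\ge0$ to bound $C^2$, $HC$ and the slope of the kernel $\mathcal{F}\cap F^{*}$—yields a numerical contradiction exactly when $lH^2>\overline{\Delta}_H(F)-\mu_H(F)$.

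The main obstacle is this last numerical analysis: the Reider-type bookkeeping that converts the existence of the destabilizer $\mathcal{F}$ into an inequality violating the threshold. This is where Bogomolov's inequality does the real work, entering both as $\overline{\Delta}_H(F)\ge0$ and through the instability of $\mathcal{W}$; everything else—the $H^2$ vanishing, the extension construction, and the elementary modification reduction of global generation to (V)—is formal and uses Theorem \ref{Surface} only as a black box. The same circle of ideas underlies Theorem \ref{Reider}.
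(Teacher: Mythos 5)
Your reduction of global generation to the vanishing statement (V) is exactly the paper's own argument: the paper takes the kernel $K$ of a surjection $E\rightarrow\mathcal{O}_x$, observes $\mu_H(K)=\mu_H(E)$ and $\overline{\Delta}_H(K)=\overline{\Delta}_H(E)+2H^2\rank E$, and applies the first assertion to $K$; your computation with $E''$ is the same. The divergence, and the genuine gap, is in the first assertion itself. The paper does not prove it by a Reider-type extension argument: it quotes \cite[Corollary 1.8]{Sun}, whose proof in that reference goes through tilt-stability, and which becomes available in positive characteristic precisely because Theorem \ref{Surface} supplies the needed Bogomolov inequality. Your substitute proof of (V) is sound up to the production of the unstable extension (the $h^2$ vanishing is fine, and one can in fact verify $\Delta(\mathcal{W})<0$ at the stated threshold using the Hodge index theorem, together with an integrality argument when $\rank F=1$), but the decisive step --- that ``playing $\mathcal{F}$ off against $F^*$ and $\mathcal{O}_S(lH)$ yields a numerical contradiction exactly when $lH^2>\overline{\Delta}_H(F)-\mu_H(F)$'' --- is asserted, not proved, and it is exactly where the entire difficulty of the theorem sits. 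Worse, no \emph{purely numerical} analysis of the destabilizer, of the kind you describe, can possibly close the argument: the split bundle $F^*\oplus\mathcal{O}_S(lH)$ is $\mu_H$-unstable with maximal destabilizer $\mathcal{O}_S(lH)$ itself, realizing the data $C=0$, $Z=\emptyset$ with no contradiction whatsoever. So, as in Reider's original argument, the non-splitness of the extension class must enter the endgame (e.g.\ pulling the class back along $\mathcal{I}_Z(lH-C)\hookrightarrow\mathcal{O}_S(lH)$ to constrain $Z$ and $C$); your outline never invokes it, nor does it address destabilizers of intermediate rank $1<\rank\mathcal{F}<\rank F+1$, and whether this route can be made to yield precisely the sharp bound $(\overline{\Delta}_H(E)-\mu_H(E))/H^2$ is left completely open.

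There is also a smaller but material slip in the duality step. The identification $H^1\big(F(K_S+lH)\big)^*\cong\Ext^1\big(\mathcal{O}_S(lH),F^*\big)$ requires $F$ locally free: for merely torsion free $F$, Serre duality gives $H^1\big(F(K_S+lH)\big)^*\cong\Ext^1\big(F(lH),\mathcal{O}_S\big)$, which differs from $H^1\big(F^*(-lH)\big)$ by local Ext contributions supported on the non-locally-free locus. This matters in your own application, since the elementary modification $E''$ is never locally free at $x$, and vanishing for the double dual does not imply vanishing for $E''$ (the sequence $0\rightarrow E''\rightarrow (E'')^{**}\rightarrow Q\rightarrow0$ contributes $H^0(Q)$ to $H^1$). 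The repair is easy --- work with the un-dualized extension $0\rightarrow\mathcal{O}_S\rightarrow\mathcal{W}'\rightarrow F(lH)\rightarrow0$ classified by $\Ext^1\big(F(lH),\mathcal{O}_S\big)$, for which the same Chern class computation applies --- but as written the statement (V) is only attacked for locally free $F$, which is not the case needed.
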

\begin{proof}
The first assertion is just \cite[Corollary 1.8]{Sun}. For the
second assertion, we consider the short exact sequence
$$0\rightarrow K\rightarrow E\xrightarrow{f}
\mathcal{O}_x\rightarrow0,$$ where $x$ is a point in $S$, $f$ is any
surjection and $K=\ker f$. One sees that $K$ is also
$\mu_H$-semistable and
$$\overline{\Delta}_H(E)=\overline{\Delta}_H(K)-2H^2\rank E.$$
Hence by the first assertion, we conclude that $H^1(K(K_S+lH))=0$ if
$$l>2\rank E+(\overline{\Delta}_H(E)-\mu_H(E))/H^2.$$  This implies
the induced map $H^0(E(K_S+lH))\rightarrow H^0(\mathcal{O}_x)$ is
surjective for any $x\in S$ and any surjection $f: E\rightarrow
\mathcal{O}_x$. Therefore $E(K_S+lH)$ is generated by global
sections.
\end{proof}

\subsection{Bridgeland stability conditions on surfaces}
The notion of Bridgeland stability condition was introduced in
\cite{Bri1}. In recent years, this stability condition has drawn a
lot of attentions, and has been investigated intensively. Let $X$ be
a smooth projective variety defined over $k$ with $\dim X=n$, and
let $H$ be an ample divisor on $X$.

\begin{definition}
A Bridgeland stability condition on $X$ is a pair $\sigma=(Z,
\mathcal{A})$, where where $\mathcal{A}$ is the heart of a bounded
$t$-structure on $\D^b(X)$, and $Z:\K(\D^b(X))\rightarrow
\mathbb{C}$ is a group homomorphism (called central charge) such
that
\begin{itemize}
\item $Z$ satisfies the following positivity property for any non-zero $E\in \mathcal{A}$:
$$Z(E)\in\{re^{i\pi\phi}: r>0, 0<\phi\leq1\}.$$
\item Every
non-zero object in $\mathcal{A}$ has a Harder-Narasimhan filtration
in $\mathcal{A}$ with respect to $\nu_Z$-stability, here the slope
$\nu_Z$ of an object $E\in \mathcal{A}$ is defined by
\begin{eqnarray*}
\nu_{Z}(E)= \left\{
\begin{array}{lcl}
+\infty,  & &\mbox{if}~\Im Z(E)=0,\\
&&\\
-\frac{\Re Z(E)}{\Im Z(E)}, & &\mbox{otherwise}.
\end{array}\right.
\end{eqnarray*}
\end{itemize}
\end{definition}

We now review the construction of Bridgeland stability condition in
\cite{Bri2, AB}. For a fixed $\mathbb{Q}$-divisor $D$ on $X$, we
define the twisted Chern character $\ch^{D}=e^{-D}\ch$. More
explicitly, we have
\begin{eqnarray*}
\begin{array}{lcl}
\ch^{D}_0=\ch_0=\rank  && \ch^{D}_2=\ch_2-D\ch_1+\frac{D^2}{2}\ch_0\\
&&\\
\ch^{D}_1=\ch_1-D\ch_0 &&
\ch^{D}_3=\ch_3-D\ch_2+\frac{D}{2}\ch_1-\frac{D^3}{6}\ch_0.
\end{array}
\end{eqnarray*}

We define the twisted slope $\mu_{H, D}$ of a coherent sheaf $E\in
\Coh(X)$ by
\begin{eqnarray*}
\mu_{H, D}(E)= \left\{
\begin{array}{lcl}
+\infty,  & &\mbox{if}~\ch^D_0(E)=0,\\
&&\\
\frac{H^{n-1}\ch_1^{D}(E)}{H^n\ch_0^{D}(E)}, & &\mbox{otherwise}.
\end{array}\right.
\end{eqnarray*}
Similarly as Definition \ref{def2.1}, one can define the stability
for sheaves with respect to $\mu_{H, D}$. Let $\alpha>0$ and $\beta$
be two real numbers. There exists a \emph{torsion pair}
$(\mathcal{T}_{\beta H+D},\mathcal{F}_{\beta H+D})$ in $\Coh(X)$
defined as follows:
\begin{eqnarray*}
\mathcal{T}_{\beta H+D}=\{E\in\Coh(X):\mu^-_{H, \beta H+D}(E)>0 \}\\
\mathcal{F}_{\beta H+D}=\{E\in\Coh(X):\mu^+_{H, \beta H+D}(E)\leq0
\}
\end{eqnarray*}
Equivalently, $\mathcal{T}_{\beta H+D}$ and $\mathcal{F}_{\beta
H+D}$ are the extension-closed subcategories of $\Coh(X)$ generated
by $\mu_{H, \beta H+D}$-semistable sheaves of positive and
non-positive slope, respectively.

\begin{definition}
We let $\Coh^{\beta H+D}(X)\subset \D^b(X)$ be the extension-closure
$$\Coh^{\beta H+D}(X)=\langle\mathcal{T}_{\beta H+D}, \mathcal{F}_{
\beta H+D}[1]\rangle.$$
\end{definition}

By the general theory of torsion pairs and tilting \cite{HRS},
$\Coh^{\beta H+D}(X)$ is the heart of a bounded t-structure on
$\D^b(X)$; in particular, it is an abelian category. Consider the
following central charge
$$Z_{\alpha, \beta}(E)=H^{n-2}\Big(\frac{\alpha^2 H^2}{2}\ch_0^{\beta H+D}(E)-\ch_2^{\beta H+D}(E)+i H\ch_1^{\beta H+D}(E)
\Big).$$

\begin{theorem}\label{Bri}
If $X$ is birational to a surface of product type, then for any
$(\alpha, \beta)\in \mathbb{R}_{>0}\times\mathbb{R}$,
$\sigma_{\alpha, \beta}=(Z_{\alpha, \beta}, \Coh^{\beta H+D}(X))$ is
a Bridgeland stability condition.
\end{theorem}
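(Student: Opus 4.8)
The plan is to verify the two defining properties of a Bridgeland stability condition for the pair $\sigma_{\alpha,\beta}=(Z_{\alpha,\beta},\Coh^{\beta H+D}(X))$, following the surface constructions of Bridgeland \cite{Bri2} and Arcara--Bertram \cite{AB}. Since $\Coh^{\beta H+D}(X)$ is already the heart of a bounded $t$-structure by the tilting theory of \cite{HRS}, it remains to check (i) the positivity of the central charge $Z_{\alpha,\beta}$ on this heart, and (ii) the existence of Harder--Narasimhan filtrations with respect to $\nu_{Z_{\alpha,\beta}}$. The essential observation is that the only characteristic-sensitive input in the classical argument is Bogomolov's inequality, which for our surface $X$ (here $\dim X=2$, so the prefactor $H^{n-2}=1$) is supplied by Theorem \ref{Surface}; once this is available, the remaining steps of \cite{Bri2,AB} are purely formal and carry over to positive characteristic with no change.

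For the positivity I would first record that $\Im Z_{\alpha,\beta}(E)=H\ch_1^{\beta H+D}(E)$, which by the very definition of the torsion pair $(\mathcal{T}_{\beta H+D},\mathcal{F}_{\beta H+D})$ is $\geq 0$ on every object of the heart. The delicate case is $\Im Z_{\alpha,\beta}(E)=0$, where one must show $\Re Z_{\alpha,\beta}(E)<0$. Decomposing $E$ through the torsion pair via the short exact sequence $0\to H^{-1}(E)[1]\to E\to H^0(E)\to 0$ and using additivity of $Z_{\alpha,\beta}$, the problem splits into the two pieces, each of which then also has vanishing imaginary part. The torsion part $H^0(E)\in\mathcal{T}_{\beta H+D}$ with $H\ch_1^{\beta H+D}=0$ is forced to be a sheaf supported in dimension zero, for which $\Re Z_{\alpha,\beta}=-\ch_2^{\beta H+D}\leq 0$ with equality only for the zero sheaf. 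The shifted part comes from $F:=H^{-1}(E)\in\mathcal{F}_{\beta H+D}$, which is an iterated extension of $\mu_{H,\beta H+D}$-semistable torsion-free sheaves of slope zero.

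For such an $F$ I would invoke Bogomolov's inequality. Because twisting by a fixed divisor class only shifts all slopes by a constant, $\mu_{H,\beta H+D}$-semistability coincides with $\mu_H$-semistability, so Theorem \ref{Surface} applies and yields $\Delta(F)\geq 0$; moreover $\Delta$ is invariant under the twist $\ch^{\beta H+D}=e^{-(\beta H+D)}\ch$, whence $(\ch_1^{\beta H+D}(F))^2\geq 2\,\rank(F)\,\ch_2^{\beta H+D}(F)$. The slope-zero condition $H\ch_1^{\beta H+D}(F)=0$ together with the Hodge index theorem gives $(\ch_1^{\beta H+D}(F))^2\leq 0$, so $\ch_2^{\beta H+D}(F)\leq 0$ when $\rank F>0$. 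Consequently $\Re Z_{\alpha,\beta}(F[1])=-\frac{\alpha^2 H^2}{2}\rank F+\ch_2^{\beta H+D}(F)<0$, using $\alpha>0$ and $H^2>0$. Summing the two contributions shows $\Re Z_{\alpha,\beta}(E)<0$ whenever $E\neq 0$ satisfies $\Im Z_{\alpha,\beta}(E)=0$, which establishes that $Z_{\alpha,\beta}$ is a stability function on the heart.

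Finally, the Harder--Narasimhan property follows from the general criterion used in \cite{Bri2,AB}: the heart $\Coh^{\beta H+D}(X)$ is Noetherian and $Z_{\alpha,\beta}$ is locally finite, so HN filtrations with respect to $\nu_{Z_{\alpha,\beta}}$ exist. These arguments rely only on Noetherianity of $\Coh(X)$, the existence of HN filtrations for slope stability, and the tilting formalism, all of which hold in any characteristic and do not require Bogomolov's inequality beyond the positivity step. I expect the main obstacle to be precisely the boundary case of the positivity, namely bounding $\ch_2^{\beta H+D}(F)$ for slope-zero semistable sheaves $F$; this is exactly the point at which Theorem \ref{Surface} is indispensable and where the characteristic-zero proofs have no positive-characteristic analogue.
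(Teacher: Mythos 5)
Your proposal is correct and follows essentially the same route as the paper: the paper's proof of Theorem \ref{Bri} simply observes that Theorem \ref{Surface} supplies Bogomolov's inequality on $X$ and then cites \cite{Bri2, AB}, and your write-up is exactly an expansion of those cited arguments (positivity via the torsion pair, the slope-zero boundary case via $\Delta(F)\geq 0$ plus the Hodge index theorem, and the HN property via Noetherianity). Your correct identification that Bogomolov's inequality is the only characteristic-sensitive input, with all other steps of \cite{Bri2, AB} carrying over formally, is precisely the content of the paper's one-line proof.
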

\begin{proof}
The assumption on $X$ guarantees that Bogomolov's inequality holds
on it. Hence the required assertion is proved in \cite{Bri2, AB}.
\end{proof}

\section{Counterexamples to Bogomolov's inequality}\label{S7}
In this section we exhibit some new counterexamples to Bogomolov's
inequality in positive characteristic. In \cite{Ray, Muk}, the
authors constructed some surfaces in positive characteristic with
$c_1^2>0$ and $c_2<0$ on which Kodaira's vanishing fails. These
surfaces give rise to rank two semistable sheaves violating
Bogomolov's inequality.

We now construct some high rank counterexamples to Bogomolov's
inequality. Let $X$ be a smooth projective surface defined over $k$
and $H$ an ample divisor on $X$. Assume that $\chr(k)=p>0$. Denote
by $F$ the absolute Frobenius morphism of $X$.

\begin{theorem}\label{Count}
Assume that $K_XH>0$, $K_X^2>2c_2(X)$ and $\Omega_X^1$ is
$\mu_H$-semistable. Let $\mathcal{L}$ be a line bundle on $X$. Then
$F_*\mathcal{L}$ is $\mu_H$-semistable but
$\Delta(F_*\mathcal{L})<0$.
\end{theorem}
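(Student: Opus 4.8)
The plan is to prove the two assertions separately, since they rest on disjoint parts of the hypothesis: the strict inequality $\Delta(F_*\mathcal{L})<0$ is a pure Chern-class computation using only $K_X^2>2c_2(X)$, while the $\mu_H$-semistability of $F_*\mathcal{L}$ uses only the $\mu_H$-semistability of $\Omega_X^1$ (with $K_XH>0$ entering there to fix the sign of slopes). Throughout I use that $X$ is smooth, so by Kunz's theorem $F$ is finite and flat of degree $p^2$; hence $F_*\mathcal{L}$ is locally free of rank $p^2$, and $\Delta(F_*\mathcal{L})$ is defined.

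For the discriminant I would run Grothendieck--Riemann--Roch for the finite morphism $F$, namely $\ch(F_*\mathcal{L})\td(X)=F_*\big(\ch(\mathcal{L})\td(X)\big)$. The only numerical input needed is the action of $F$ on the rational Chow ring: since $F^*$ acts on $\mathrm{CH}^i(X)$ by multiplication by $p^i$ and $F_*F^*=\deg F=p^2$, one gets $F_*=p^{2-i}$ on $\mathrm{CH}^i(X)$. Feeding this into GRR and solving degree by degree yields
\[
\ch_1(F_*\mathcal{L})=p\,c_1(\mathcal{L})+\tfrac{p(p-1)}{2}K_X,
\]
together with a similar (longer) expression for $\ch_2$. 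The dependence on $\mathcal{L}$ then cancels in $\Delta=\ch_1^2-2\ch_0\ch_2$, and a short computation collapses everything to
\[
\Delta(F_*\mathcal{L})=\frac{p^2(p^2-1)}{12}\big(2c_2(X)-K_X^2\big).
\]
Since $p\geq2$ the prefactor is positive, so $K_X^2>2c_2(X)$ gives $\Delta(F_*\mathcal{L})<0$ at once. One can sanity-check the method on a curve, where it reproduces $\deg F_*\mathcal{O}_C=(p-1)(g-1)$.

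For semistability I would exploit the canonical connection $\nabla$ on $W:=F^*F_*\mathcal{L}$, for which pullbacks of local sections are horizontal, together with its canonical filtration $W=W_0\supset W_1\supset\cdots$ whose graded pieces are (subquotients of) $\mathcal{L}\otimes S^i\Omega_X^1$. Because $F^*$ multiplies $\mu_H$ by $p$, a subsheaf $B\subset F_*\mathcal{L}$ destabilizes if and only if $F^*B\subset W$ has slope exceeding $p\,\mu_H(F_*\mathcal{L})=\mu_H(W)$. The decisive structural fact is that $F^*B$ is $\nabla$-horizontal, i.e. $\nabla(F^*B)\subset\Omega_X^1\otimes F^*B$; intersecting with the canonical filtration and passing to graded pieces, $\nabla$ induces injections $\mathrm{gr}_i(F^*B)\hookrightarrow\mathrm{gr}_{i-1}(F^*B)\otimes\Omega_X^1$. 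Using that $\Omega_X^1$ is $\mu_H$-semistable with $\mu_H(\Omega_X^1)=\tfrac12 K_XH>0$, these injections should bound the slopes of the graded pieces of $F^*B$ from above by those of $W$, forcing $\mu_H(F^*B)\leq\mu_H(W)$ and hence $\mu_H(B)\leq\mu_H(F_*\mathcal{L})$.

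The main obstacle is precisely this last slope estimate. In positive characteristic, symmetric and tensor powers of semistable sheaves need not be semistable, so I cannot simply assert that the graded pieces $\mathcal{L}\otimes S^i\Omega_X^1$ are semistable and read off their slopes; instead the bound must be extracted directly from the horizontality injections above and the semistability of $\Omega_X^1$ itself, keeping careful track of ranks through the truncated symmetric powers (whose ranks sum to $p^2$). The positivity $K_XH>0$ is used to guarantee that the graded slopes of $W$ are monotone in $i$, so that the extremal slopes sit in the bottom and top pieces and the averaging argument closes. Carrying out this estimate carefully---rather than the GRR computation, which is routine---is where the real work lies.
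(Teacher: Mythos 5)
Your treatment of the discriminant is exactly the paper's and is complete: the paper likewise applies Grothendieck--Riemann--Roch to $F$, extracts $\ch_0(F_*\mathcal{L})=p^2$ and $\ch_1(F_*\mathcal{L})=pc_1(\mathcal{L})+\frac{p^2-p}{2}K_X$, observes the same cancellation of the $\mathcal{L}$-dependent terms, and lands on $\Delta(F_*\mathcal{L})=\frac{p^4-p^2}{12}\bigl(2c_2(X)-K_X^2\bigr)$, which is identical to your $\frac{p^2(p^2-1)}{12}\bigl(2c_2(X)-K_X^2\bigr)$. Your route through the action of $F_*$ on $\mathrm{CH}^i(X)_{\mathbb{Q}}$ (multiplication by $p^{2-i}$, via $F^*=p^i$ and $F_*F^*=p^2$) is the same computation the paper performs implicitly when it writes $F_*\bigl(\frac{c_1}{2}+c_1(\mathcal{L})\bigr)=p\bigl(\frac{c_1}{2}+c_1(\mathcal{L})\bigr)$, and your curve sanity check is consistent.

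The genuine gap is in the semistability half, and you have named it yourself. The paper does not prove semistability of $F_*\mathcal{L}$: it quotes it from \cite[Theorem 5.1]{KS} (see also \cite{SunX}), whose hypotheses are precisely that $\Omega_X^1$ is $\mu_H$-semistable and $K_XH>0$. What you sketch --- the canonical connection $\nabla$ on $V=F^*F_*\mathcal{L}$, the canonical filtration with graded pieces $\mathcal{L}\otimes T^\ell\Omega_X^1$ (truncated symmetric powers, ranks summing to $p^2$), the $\nabla$-stability of $F^*B$ for a subsheaf $B\subset F_*\mathcal{L}$, and the induced injections of graded pieces into $\mathrm{gr}_{\ell-1}\otimes\Omega_X^1$ --- is exactly the internal mechanism of those cited theorems. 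But the step you defer with ``should bound the slopes \dots\ where the real work lies'' is their entire nontrivial content: one must show that the filtration of $F^*B$ by $F^*B\cap V_\ell$ has graded pieces of slope at most $c_1(\mathcal{L})H+\frac{\ell}{2}K_XH$, deducing this from the semistability of $\Omega_X^1$ through the horizontality injections (since, as you correctly note, no tensor-product theorem is available in characteristic $p$), and then close by averaging over $\ell$; this estimate occupies the technical heart of X.~Sun's paper. As written, your proposal is therefore not a complete proof of the first assertion; it becomes one either by carrying out that estimate in full or by doing what the paper does and simply citing \cite{KS} or \cite{SunX}, for which your hypotheses $K_XH>0$ and semistability of $\Omega_X^1$ are tailor-made.
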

\begin{proof}
By \cite[Theorem 5.1]{KS}, one sees that $F_*\mathcal{L}$ is
$\mu_H$-semistable (see also \cite{SunX}). It remains to compute
$\Delta(F_*\mathcal{L})$ explicitly.

From the Grothendieck-Riemann-Roch theorem, it follows that
$$\ch(F_*\mathcal{L})\td(X)=F_*\big(\ch(\mathcal{L})\td(X)\big).$$
Since $\td(X)=1+\frac{1}{2}c_1+\frac{1}{12}(c_1^2+c_2)$, the above
equation implies
$$
\frac{1}{2}\ch_0(F_*\mathcal{L})c_1+\ch_1(F_*\mathcal{L})=F_*\left(\frac{c_1}{2}+c_1(\mathcal{L})
\right)=p\left(\frac{c_1}{2}+c_1(\mathcal{L}) \right)
$$
and
$$
\frac{c_1^2+c_2}{12}\ch_0(F_*\mathcal{L})+\frac{c_1}{2}\ch_1(F_*\mathcal{L})+\ch_2(F_*\mathcal{L})
=\frac{c_1^2+c_2}{12}+\frac{c_1}{2}c_1(\mathcal{L})+\ch_2(\mathcal{L}).
$$
A simple computation shows $\ch_0(F_*\mathcal{L})=p^2$,
$$\ch_1(F_*\mathcal{L})=\frac{p^2-p}{2}K_X+pc_1(\mathcal{L})$$ and
$$\ch_2(F_*\mathcal{L})=\frac{1-p^2}{12}(K_X^2+c_2)+\frac{p^2-p}{4}K_X^2+\frac{p-1}{2}K_Xc_1(\mathcal{L})+\frac{c_1^2(\mathcal{L})}{2}.$$
Therefore one concludes that
$$\Delta(F_*\mathcal{L})=\frac{p^4-p^2}{12}(2c_2-K_X^2)<0.$$
\end{proof}
To get a surface satisfying the hypotheses of Theorem \ref{Count},
let $S$ be a smooth complex projective surface with ample $K_S$ and
$K_S^2>2c_2(S)$. It is well known that $\Omega^1_S$ is
$\mu_{K_S}$-stable (see \cite{Tsu} for example). By the standard
spreading out technique, we have a subring $R\subset \mathbb{C}$,
finitely generated over $\mathbb{Z}$, and a scheme
$\pi:S_R\rightarrow \spec R$ so that $\pi$ is smooth and projective
and $S=S_R\times_{R}\mathbb{C}$. By the openness of ampleness and
stability, one sees that $K_{S_m}$ is ample and $\Omega^1_{S_m}$ is
$\mu_{K_{S_m}}$-semistable for a general maximal ideal $m\in\spec
R$, where $S_m=S_R\times_{R}\overline{(R/m)}$ is the geometric fibre
of $\pi$ over $m$. One notes that $\chr (R/m)>0$. Hence $S_m$
satisfies the hypotheses of Theorem \ref{Count}.

\bibliographystyle{amsplain}

\end{document}